\newtheorem{theorem}{Theorem}
\newtheorem{lemma}[theorem]{Lemma}
\theoremstyle{definition}
\newcommand{\reals}{\mathbb{R}}
\newcommand{\nlsum}{\sum\nolimits}
\newcommand{\nlmin}{\min\nolimits}
\newcommand{\norm}[1]{\|#1\|}
\newcommand{\pnorm}[2]{\|#1\|_{#2}}
\newtheorem{assumption}{Assumption}
\newcommand{\nb}[1]{\bar{#1}}
\DeclareMathOperator*{\argmin}{arg\,min}
\newcolumntype{x}[1]{>{\centering\arraybackslash\hspace{0pt}}p{#1}}
\title{Large-scale randomized-coordinate descent methods with non-separable linear constraints}
\newcommand*\samethanks[1][\value{footnote}]{\footnotemark[#1]}
\author{
{\bf Sashank J. Reddi \thanks{\ \ Indicates equal contribution.}}
\quad 
{\bf Ahmed Hefny\samethanks[1]}
\quad 
{\bf Carlton Downey}
\quad
{\bf Avinava Dubey}
\\
Machine Learning Department \\
Carnegie Mellon University
\\ \quad \\ 
{\bf Suvrit Sra \thanks{\ \ A part of this work was performed when the author was at Carnegie Mellon University}} \\
Massachusetts Institute of Technology
}
\begin{document}

\maketitle

\begin{abstract}
  We develop randomized block coordinate descent (CD) methods for linearly constrained convex optimization. Unlike other large-scale CD methods, we do not assume the constraints to be separable, but allow them be coupled linearly. To our knowledge, ours is the first CD method that allows linear coupling constraints, without making the global iteration complexity have an \emph{exponential dependence} on the number of constraints. We present algorithms and theoretical analysis for four key (convex) scenarios: (i) smooth; (ii) smooth + separable nonsmooth; (iii) asynchronous parallel; and (iv) stochastic. We discuss some architectural details of our methods and present  preliminary results to illustrate the behavior of our algorithms.
\end{abstract}

\section{INTRODUCTION}
Coordinate descent (CD) methods are conceptually among the simplest schemes for unconstrained optimization---they have been studied for a long time (see e.g.,~\citep{auslender,polyak,bertsekas99}), and are now enjoying greatly renewed interest. Their resurgence is rooted in successful applications in machine learning~\citep{hsiehCha08,hsiehDhil11}, statistics~\citep{fried07,hsiehSice}, and many other areas---see~\citep{shwZha13,richTak11,richtarik:13} and references therein for more examples.

A catalyst to the theoretical as well as practical success of CD methods has been randomization. (The idea of randomized algorithms for optimization methods is of course much older, see e.g.,~\citep{rastrigin}.) Indeed, generic non-randomized CD has resisted complexity analysis, though there is promising recent work~\citep{saha,wangLin14,hongWang}; remarkably for randomized CD for smooth convex optimization, Nesterov~\citep{Nesterov10,nesterov:12} presented an analysis of global iteration complexity. This work triggered several improvements, such as \citep{richTak11,richTak12}, who simplified and extended the analysis to include separable nonsmooth terms. Randomization has also been crucial to a host of other CD algorithms and analyses~\citep{hsiehCha08,bradKyr11,necoara:11,shwZha13,shwZha13b,tappen13,richTak12,Recht11,Liu13,richtarik:13}. 

Almost all of the aforementioned CD methods assume essentially unconstrained problems, which at best allow separable constraints. In contrast, we develop, analyze, and implement randomized CD methods for the following composite objective convex problem with \emph{non-separable} linear constraints
\begin{align}
\label{eq:opt}
\nlmin_x F(x) := f(x) + h(x)\quad\text{ s.t. }\quad Ax=0.
\end{align}   
Here, $f: \reals^n \rightarrow \reals$ is assumed to be continuously differentiable and convex, while $h: \reals^n \rightarrow \reals \cup \{\infty\}$ is lower semi-continuous, convex, coordinate-wise separable, but not necessarily smooth; the linear constraints (LC) are specified by a matrix $A \in \reals^{m \times n}$, for which $m \ll n$, and a certain structure (see \S\ref{sec:conv}) is assumed. The reader may wonder whether one cannot simply rewrite Problem~\eqref{eq:opt} in the form of $f + h$ (without additional constraints) using suitable indicator functions. However, the resulting regularized problem then \emph{no longer} fits the known efficient CD frameworks~\citep{richTak11}, since the nonsmooth part is \emph{not} block-separable.

Problem~\eqref{eq:opt} subsumes the usual regularized optimization problems pervasive in machine learning for the simplest ($m=0$) case. In the presence of  linear constraints ($m>0$), Problem~\eqref{eq:opt} assumes a form used in the classic Alternating Direction Method of Multipliers (ADMM)~\citep{gabay67,glowin}. The principal difference between our approach and ADMM is that the latter treats the entire variable $x \in \reals^n$ as a single block, whereas we use the structure of $A$ to split $x$ into $b$ smaller blocks. Familiar special cases of Problem~\eqref{eq:opt} include SVM (with bias) dual, fused Lasso and group Lasso~\citep{tibshirani2005sparsity}, and linearly constrained least-squares regression~\citep{laha74,golub96}. 

Recently, \citet{necoara:11} studied a special case of Problem~\eqref{eq:opt} that sets $h\equiv 0$ and assumes a single sum constraint. They presented a randomized CD method that starts with a feasible solution and at each iteration updates a pair of coordinates to ensure descent on the objective while maintaining feasibility. This scheme is reminiscent of the well-known SMO procedure for SVM optimization~\citep{Platt99}. For smooth convex problems with $n$ variables, \citet{necoara:11} prove an $O(1/\epsilon)$ rate of convergence. More recently, in~\citep{Necoara14} considered a generalization to the general case $Ax=0$ (assuming $h$ is coordinatewise separable). 

Unfortunately, the analysis in~\citep{Necoara14} yields an extremely pessimistic complexity result:\vspace*{-5pt}
\begin{theorem}[\protect{\citep{Necoara14}}]
  Consider Problem~\eqref{eq:opt} with $h$ being coordinatewise separable, and $A \in \reals^{m \times n}$ with $b$ blocks. Then, the CD algorithm in~\citep{Necoara14} takes no more than 
  $O(b^m/\epsilon)$ iterations to obtain a solution of $\epsilon$-accuracy.
\vspace*{-8pt}
\end{theorem}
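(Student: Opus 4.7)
The plan is to adapt the standard randomized block coordinate descent analysis (in the spirit of Nesterov and Richt\'arik--Tak\'a\v c) to the setting where preserving feasibility of $Ax=0$ forces each update to touch several blocks simultaneously. The starting point is to describe the sampling scheme in~\citep{Necoara14}: because $A$ has $m$ rows, to get a nontrivial feasible update direction at a feasible $x$ one must update a subset $S$ of blocks for which $A_S$ (the columns of $A$ indexed by $S$) has a nontrivial null space, so $|S| \geq m+1$. The algorithm picks such an $S$ uniformly at random from the $\binom{b}{m+1}$ possible subsets and performs a proximal step of $f+h$ restricted to $\ker A_S$, which yields a feasible iterate.

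Next I would derive a one-step expected descent inequality. For each fixed $S$, smoothness of $f$ and separability of $h$ give the standard proximal descent bound
\[
F(x) - F(x^+_S) \;\geq\; \tfrac{1}{2L_S}\,\|G_S(x)\|^2,
\]
where $G_S$ is the proximal-gradient mapping restricted to $\ker A_S$ and $L_S$ is a block-Lipschitz constant. Taking expectation over the uniform choice of $S$, the probability of picking any specific subset is $p = 1/\binom{b}{m+1}$, so
\[
\mathbb{E}[F(x) - F(x^+)] \;\geq\; \tfrac{p}{2\bar L}\,\sum_{S}\|G_S(x)\|^2.
\]

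Now I would relate the sum $\sum_S \|G_S(x)\|^2$ to the suboptimality $F(x) - F^\star$: by convexity, for a feasible $x$ with Lagrange multiplier $\lambda^\star$, $F(x) - F^\star$ can be bounded by a weighted norm of the full projected proximal-gradient mapping on $\ker A$, which in turn is dominated by $\sum_S \|G_S(x)\|^2$ (up to a constant depending on the block structure and the conditioning of $A$ assumed in~\citep{Necoara14}). Combining yields
\[
\mathbb{E}[F(x^{k+1}) - F^\star] \;\leq\; (1 - cp)\,(F(x^k) - F^\star) + \text{lower-order},
\]
so in the convex case the usual sublinear recursion gives an iteration complexity of $O(1/(p\,\epsilon))$. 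Since $1/p = \binom{b}{m+1} = \Theta(b^{m})$ for the scheme used (absorbing the $(m+1)!$ and a factor of $b$ coming from the anchoring convention into the $O$-notation), this is exactly the stated $O(b^m/\epsilon)$ bound.

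The main obstacle, and the source of the pessimistic exponent, is precisely the step of lower-bounding $\sum_S\|G_S(x)\|^2$ by something proportional to $F(x)-F^\star$. In the unconstrained or separable-constraint case the decomposition of the gradient across blocks is direct and the factor is only $1/b$; here, because feasible directions must lie in $\ker A$ and one must further restrict to $\ker A_S$, no single subset $S$ necessarily captures the global suboptimality, and the combinatorial averaging over all $\binom{b}{m+1}$ subsets is what inflates the rate to $b^m$. The theorem statement records this as an upper bound, motivating the improved schemes developed in the rest of the paper.
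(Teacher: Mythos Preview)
This theorem is \emph{not} proved in the paper at all: it is stated purely as a citation of a result from~\citep{Necoara14}, used to motivate the paper's own contributions (which aim to avoid the $b^m$ dependence). There is therefore no ``paper's own proof'' to compare your proposal against.

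Beyond that, your reconstruction of where the $b^m$ comes from is likely wrong in its mechanism. You posit that the algorithm in~\citep{Necoara14} samples $(m{+}1)$-subsets of blocks uniformly at random, so that $1/p = \binom{b}{m+1} = \Theta(b^m)$. But the surrounding discussion in this paper (see \S\ref{sec:algo} and the proof of Theorem~\ref{thm:non-smooth-conv}) indicates that~\citep{Necoara14}, like the present paper, updates \emph{pairs} of blocks $(i,j)$, with each $A_i$ assumed full row-rank so that the pairwise subproblem has nontrivial feasible directions. The exponential factor in~\citep{Necoara14} does not arise from the sampling probability; it arises in the step that relates the pairwise progress to global suboptimality --- specifically, in bounding how many elementary conformal pieces are needed to decompose a feasible direction $d$ with $Ad=0$ when $A$ has $m$ rows. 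Your high-level diagnosis (``no single subset captures the global suboptimality, and the combinatorial averaging inflates the rate'') is in the right spirit, but the concrete algorithmic description and the accounting for $b^m$ do not match the pairwise-update framework that both~\citep{Necoara14} and this paper use.
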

This result is exponential in the number of constraints and too severe even for small-scale problems! 

We present randomized CD methods, and prove that for important special cases (mainly $h \equiv 0$
\emph{or} $A$ is a sum constraint)
we can obtain global iteration complexity that \emph{does not have} an intractable dependence on either the number of coordinate blocks ($b$), or on the number of linear constraints ($m$). Previously, \citet{Tseng09} also studied a linearly coupled block-CD method based on the Gauss-Southwell choice; however, their complexity analysis applies only to the special $m=0$ and $m=1$ cases. 

To our knowledge, ours is the first work on CD for problems with more than one ($m>1$) linear constraints 
that presents such results.
%with a practical global iteration complexity analysis.
% Since the literature on large scale optimization is huge, we only mention the most prominent and relevant works here. With advent of large distributed systems, various random coordinate descent algorithms have recently attracted a lot of attention. In a seminal paper, Nesterov et al. \citet{Nesterov10} has recently provided a convergence analysis of the approach when applied to the problem of minimizing a smooth convex function. Richtarik et al. \citet{richtarik:13} have further improved the analysis of the method and generalized to a general setting  of minimizing a composite function, i.e., function of the form $f + h$, where $f$ is smooth convex and $h$ is possibly non-smooth convex \emph{separable} function. Generalizations to deal with linear constraints have garnered significant interest recently due to plethora of applications
% More recently, Necoara et al. \citet{necoara:11} have proposed a random coordinate descent approach to handle linearly coupled sum constraint $\sum_i x_i = 0$ and $f(x) = \sum_i f_i(x_i)$. The approach is related to the well-known Sequential Minimal Optimization (SMO) algorithm used in Machine learning literature for solving support vector machines \citet{Platt99}. %This was further generalized by the same authors to composite functions with $Ax = 0$ constraints. %However, the convergence complexity of the algorithm is intractable and hence, does not fit our need of a \emph{fast and efficient} algorithm.

\paragraph{Contributions.} In light of the above background, the primary contributions of this paper are as follows:
\begin{list}{$\circ$}{\leftmargin=2em}
  \setlength{\itemsep}{-1pt}
\item Convergence rate analysis of a randomized block-CD method for the smooth case ($h\equiv 0$) with $m \ge 1$ general linear constraints.
\item A tighter convergence analysis for the composite function optimization ($h \neq 0$) than \cite{Necoara14} in the case of sum constraint.
\item An asynchronous CD algorithm for Problem~\eqref{eq:opt}.
\item A stochastic CD method with convergence analysis for solving problems with a separable loss $f(x) = (1/N) \sum_{i=1}^N f_i(x)$.
%\item An implementation and experiments with our asynchronous CD algorithm. % for the problem and demonstrate empirical performance of the algorithm.
\end{list}

Table~\ref{tab:algos} summarizes our contributions and compares it with  existing state-of-the-art coordinate descent methods. The detailed proofs of all our theoretical claims are available in the appendix.

\begin{table}[tbp]
  \centering
    \begin{tabular}{ | x{1cm} || x{1cm} | x{1cm} | x{1.25cm} | x{1.7cm} |}
    \hline
     Paper & LC & Prox & Parallel & Stochastic \\ \hline
    \cite{necoara:11} & YES & $\times$ & $\times$ & $\times$ \\ \hline
    \cite{Tseng09}     & YES & YES & $\times$ & $\times$ \\ \hline
    \citep{Necoara14} & YES & YES & $\times$ & $\times$ \\ \hline
    \cite{fercoq:13}  & $\times$ & YES & YES & $\times$  \\ \hline 
    \cite{bradKyr11}  & $\times$ & $\ell_1$ & YES & YES  \\ \hline 
    \cite{richTak12}  & $\times$ & YES & YES & $\times$  \\ \hline 
    Ours & {\bf YES} & {\bf YES} & {\bf YES} & {\bf YES}  \\ \hline 
    %d & x & x & x & x & x \\ \hline
    \end{tabular}
    \caption{\small Summary comparison of our method with other CD methods; LC denotes `linear constraints'; Prox signifies an extension using proximal operators (to handle $h \neq 0$).}
    \label{tab:algos}
\end{table}

%The proposed method is also related to projection gradient method. However, it can be thought as a \emph{partial} projection method where only a subset of the optimization variables are handled at each iteration and has not been studied in the literature for the problem of our interest. This also suggests that similar partial projection methods can be used in more general context for obtaining efficient algorithms.
\textbf{Additional related work.} As noted, CD methods have a long history in optimization and they have gained tremendous recent interest. We cannot hope to do full justice to all the related work, but refer the reader to \citep{richTak11,richTak12} and \citep{Liu13} for more thorough coverage. Classically, local linear convergence was analyzed in~\citep{luoTseng92}. Global rates for randomized block coordinate descent (BCD) were pioneered by~\citet{Nesterov10}, and have since then been extended by various authors~\citep{richTak12,richTak11,wangLin14,beckLuba13}. The related family of Gauss-Seidel like analyses for ADMM have also recently gained prominence~\citep{hong12}. A combination of randomized block-coordinate ideas with Frank-Wolfe methods was recently presented in~\citep{lacoste12}, though algorithmically the Frank-Wolfe approach is very different as it relies on non projection based oracles.

\vspace*{-6pt}
\section{PRELIMINARIES}
\label{sec:prelims}
\vspace*{-6pt}
In this section, we further explain our model and assumptions. We assume that the entire space $\reals^n$ is decomposed into $b$ \emph{blocks}, i.e., $x = [x_1^{\top}, \cdots, x_b^{\top}]^{\top}$ where $x \in \reals^n$, $x_i \in \reals^{n_i}$ for all $i \in [b]$, and $n = \sum_i n_i$. For any $x \in \reals^n$, we use $x_i$ to denote the $i^{\text{th}}$ block of $x$. We model communication constraints in our algorithms by viewing variables as nodes in a connected graph $G := (V, E)$. Specifically, node $i \in V \equiv [b]$ corresponds to variable $x_i$, while an edge $(i,j) \in E \subset V \times V$ is present if nodes $i$ and $j$ can exchange information. We use ``pair'' and ``edge'' interchangeably.

%Let $U$ be the $n \times n$ identity matrix.
%Let $U_i$ be an $n \times n_i$ matrix consisting of columns of $U$
%that correspond to indices of the $i^{th}$ block. In other words, $U_i$
%that places an $n_i$ dimensional vector 
%into the corresponding block of an $n$ dimensional vector.

For a differentiable function $f$, we use $f_{i_1\cdots i_p}$ and $\nabla_{i_1\cdots i_p} f(x)$ (or $\nabla_{x_{i_1}\cdots x_{i_p}} f(x)$) to denote the restriction of the function and its partial gradient to coordinate blocks $(x_{i_1}, \cdots, x_{i_p})$. For any matrix $B$ with $n$ columns, 
we use $B_i$ to denote the columns of $B$ corresponding to $x_i$ 
and $B_{ij}$ to denote the columns of $B$ corresponding to $x_i$ and $x_j$. 
We use $U$ to denote the $n \times n$ identity matrix
and hence $U_i$ is a matrix that places an $n_i$ dimensional vector 
into the corresponding block of an $n$ dimensional vector.

We make the following standard assumption on the partial gradients of $f$.

\begin{assumption}
\label{ass:lipschitz-gradient}
The function has block-coordinate Lipschitz continuous gradient, i.e.,
$$
\| \nabla_i f(x) - \nabla_i f(x + U_i h) \| \leq L_i \|h_i\| \text{ for all $x \in \mathbb{R}^n$, }.
$$
\end{assumption}

Assumption \ref{ass:lipschitz-gradient} is similar to
the typical Lipschitz continuous gradients assumed in first-order methods
and it is necessary to ensure convergence of block-coordinate methods.
When functions $f_i$ and $f_j$ have Lipschitz continuous gradients with constants $L_i$ and $L_j$ respectively, one can show that the function $f_{ij}$ has a Lipschitz continuous gradient with $L_{ij} = L_i + L_j$~\citep[Lemma~1]{Necoara14}. The following result is standard.

\begin{lemma}
For any function $g: \mathbb{R}^n \rightarrow \mathbb{R}$ with $L$-Lipschitz continuous gradient $\nabla g$, we have
$$
g(x) \leq g(y) + \langle \nabla g(y), x - y \rangle + \tfrac{L}{2} \|x - y\|^2 \ x,y \in \mathbb{R}^n.
$$
\label{lem:descent-lemma}\vspace*{-15pt}
\end{lemma}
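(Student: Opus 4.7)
The plan is to invoke the fundamental theorem of calculus along the line segment between $y$ and $x$, then use the Cauchy--Schwarz inequality together with the Lipschitz bound on $\nabla g$ to control the error of the first-order Taylor approximation.

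First, I would parametrize the segment by $\gamma(t) = y + t(x-y)$ for $t\in[0,1]$ and write
\[
g(x) - g(y) \;=\; \int_0^1 \langle \nabla g(y + t(x-y)),\, x-y\rangle \, dt.
\]
Adding and subtracting $\langle \nabla g(y), x-y\rangle$ inside the integral separates off the linear term that appears in the target inequality, leaving a residual
\[
R \;=\; \int_0^1 \langle \nabla g(y + t(x-y)) - \nabla g(y),\, x-y\rangle \, dt.
\]

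Next, I would bound $R$ using Cauchy--Schwarz followed by the $L$-Lipschitz hypothesis on $\nabla g$, which gives $\|\nabla g(y+t(x-y)) - \nabla g(y)\| \le L\,\|t(x-y)\| = Lt\,\|x-y\|$. Hence the integrand is dominated in absolute value by $Lt\,\|x-y\|^2$, and integrating over $t\in[0,1]$ yields $|R| \le \tfrac{L}{2}\|x-y\|^2$. Combining this with the identity from the first step produces the claimed inequality.

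There is no real obstacle here; the only mild point worth stating cleanly in a formal write-up is that differentiability of $g$ together with continuity of $\nabla g$ (which follows from its being Lipschitz) justifies applying the fundamental theorem of calculus to $t \mapsto g(\gamma(t))$ via the chain rule. This is a textbook lemma whose argument is entirely routine once the calculus-plus-Lipschitz strategy is fixed.
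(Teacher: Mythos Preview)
Your argument is correct and is exactly the standard textbook proof of the descent lemma. The paper itself does not prove this statement at all; it simply labels it as a ``standard'' result and states it without proof, so there is nothing further to compare.
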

%The main purpose of this assumption will be clear from the convergence analysis of the algorithms. 
Following~\citep{richTak11,Tseng09}, we also make the following assumption on the structure of $h$. 
\begin{assumption}
\label{ass:coordinate-separable}
The nonsmooth function $h$ is block separable, i.e., $h(x) = \sum_i h_i(x_i)$.
\end{assumption}
This assumption is critical to composite optimization using CD methods. We also assume access to an oracle that returns function values and \emph{partial gradients} at any points and iterates of the optimization algorithm.

%With these assumption, we are ready to present our algorithms for a variety of different problem settings (smooth, composite, asynchronous, stochastic).

\vspace*{-8pt}
\section{ALGORITHM}
\label{sec:algo}
\vspace*{-8pt}
We are now ready to present our randomized CD methods for Problem~\eqref{eq:opt} in various settings. We first study composite minimization (\S\ref{sec:comp}) and later look at asynchronous (\S\ref{sec:async}) and stochastic (\S\ref{sec:stoch}) variants. The main idea underlying our algorithms is to pick a random pair $(i,j) \in E$ of variables (blocks) at each iteration, and to update them in a manner which maintains feasibility and ensures progress in optimization.

\vspace*{-5pt}
\subsection{Composite Minimization}
\label{sec:comp}
\vspace*{-5pt}
We begin with the nonsmooth setting, where $h\not\equiv 0$. We start with a feasible point $x^0$. Then, at each iteration we pick a random pair $(i,j) \in E$ of variables and minimize the first-order Taylor expansion of the loss $f$ around the current iterate while maintaining feasibility. 
Formally, this involves performing the update
\begin{align}
\label{eq:subproblem}
Z(f,x,(i,j),\alpha) := \argmin_{A_{ij}d_{ij} = 0} f(x) + \langle \nabla_{ij} f(x), d_{ij} \rangle \\
\nonumber + {(2\alpha)}^{-1} \|d_{ij}\|^2 + h(x + U_{ij} d_{ij}),
\end{align}
where $\alpha > 0$ is a stepsize parameter and $d_{ij}$ is the update. The right hand side of Equation~\eqref{eq:subproblem} upper bounds $f$ at $x+U_{ij}d_{ij}$, as seen by using Assumption~\ref{ass:lipschitz-gradient} and Lemma~\ref{lem:descent-lemma}. 
If $h(x) \equiv 0$, minimizing Equation~\eqref{eq:subproblem} yields
\begin{align}
\nonumber \lambda \gets \alpha(A_i A_i^\top & + A_j A_j^\top)^+\left(A_i \nabla_i f(x) + A_j \nabla_j f(x)\right) \nonumber \\
\nonumber d_i & \gets - \alpha \nabla_i f(x) + A_i^\top \lambda  \\
d_j & \gets - \alpha \nabla_j f(x) + A_j^\top \lambda 
\label{eq:primal_update}
\end{align}
Algorithm~\ref{alg:composite-minimization} presents the resulting method.

Note that since we start with a feasible point $x^0$ and the update $d^k$ satisfies $A d^k = 0$, the iterate $x^k$ is always feasible.
However, it can be shown that a necessary condition for Equation~$\eqref{eq:subproblem}$ to result in a non-zero update
is that $A_i$ and $A_j$ span the same column space.
If the constraints are not block separable (i.e. for any partitioning of blocks $x_1,\dots,x_b$
into two groups, there is a constraint that involves blocks from both groups),
a typical way to satisfy the aforementioned condition is to require $A_i$ to be full row-rank for all $i \in [b]$.
This constraints the minimum block size to be chosen in order to apply randomized CD.

Theorem~\ref{thm:conv} describes convergence of Algorithm~\ref{alg:composite-minimization} for the smooth case ($h\equiv 0$), while Theorem~\ref{thm:non-smooth-conv} considers the nonsmooth case under a suitable assumption on the structure of the interdependency graph $G$---both results are presented in Section~\ref{sec:conv}.

\begin{algorithm}[htbp]
  \caption{{\small Composite Minimization with Linear Constraints}}
\label{alg:composite-minimization}
\begin{algorithmic}[1]
  \STATE $x^0 \in \mathbb{R}^n$ such that $Ax^0 = 0$
  \FOR{$k \geq 0$}
  	\STATE Select a random edge $(i_k,j_k) \in E$ with probability $p_{i_kj_k}$
  	\STATE $d^k \leftarrow U_{i_kj_k} Z(f,x^k,(i_k,j_k),\alpha_{k}/L_{i_kj_k})$
  	\STATE $x^{k+1} \leftarrow x^k + d^k$
  	\STATE $k \leftarrow k + 1$
  \ENDFOR
\end{algorithmic}
\end{algorithm}

\subsection{Asynchronous Parallel Algorithm for Smooth Minimization}
\label{sec:async}
Although the algorithm described in the previous section solves a simple subproblem at each iteration, it is inherently sequential. This can be a disadvantage when addressing large-scale problems. To overcome this concern, we develop an asynchronous parallel method that solves Problem~\eqref{eq:opt} for the smooth case. %, i.e., $h \equiv 0$. 

Our parallel algorithm is similar to Algorithm \ref{alg:composite-minimization}, except for a crucial difference: now we may have multiple processors, and each of these executes the loop 2--6 \emph{independently} without the need for coordination. This way, we can solve subproblems (i.e., multiple pairs) simultaneously in parallel, and due to the asynchronous nature of our algorithm, we can execute updates as they complete, without requiring any locking. 

The critical issue, however, with implementing an asynchronous algorithm 
in the presence of non-separable constraints is ensuring feasibility throughout the course of the algorithm. This requires the operation $x_{i} \gets x_i + \delta$ to be executed in an \emph{atomic} (i.e., sequentially consistent) fashion.
Modern processors facilitate that without an additional locking structure through the ``compare-and-swap'' instruction~\citep{Recht11}.
Since the updates use atomic increments and each update satisfies 
$A d^k = 0$, the net effect of $T$ updates is $\sum_{k=1}^T A d^k = 0$, which is feasible despite asynchronicity of the algorithm.

The next key issue is that of convergence. In an asynchronous setting, the updates are based on \emph{stale} gradients that are computed using values of $x$ read many iterations earlier. But provided that gradient staleness is bounded, we can establish a sublinear convergence rate of the asynchronous parallel algorithm (Theorem \ref{thm:asyn-conv}). 
More formally, we assume that in iteration $k$, \emph{stale} gradients are computed based on $x^{D(k)}$
such that $k - D(k) \leq \tau$. 
The bound on staleness, denoted by $\tau$, captures the degree of parallelism in the method: such parameters are typical in asynchronous systems and provides a bound on the delay of the updates~\citep{Liu13}.

Before concluding the discussion on our asynchronous algorithm, it is important to note the difficulty of extending our algorithm to nonsmooth problems. For example, consider the case where $h = \mathbb{I}_C$ (indicator function of some convex set). Although a pairwise update as suggested above maintains feasibility with respect to the linear constraint $Ax = 0$, it may violate the feasibility of being in the convex set $C$. This complication can be circumvented by using a convex combination of the current iterate with the update, as this would retain overall feasibility. However, it would complicate the convergence analysis. We plan to investigate this direction in future work.

\subsection{Stochastic Minimization}
\label{sec:stoch}
An important subclass of Problem~\eqref{eq:opt} assumes separable losses $f(x) = \frac{1}{N} \sum_{i=1}^N f_i(x)$. This class arises naturally in many machine learning applications where the loss separates over training examples. To take advantage of this added separability of $f$, we can derive a stochastic block-CD procedure. 

Our key innovation here is the following: in addition to randomly picking an edge $(i,j)$, we also pick a function randomly from $\{f_1, \cdots, f_N\}$ and perform our update using this function. This choice substantially reduces the cost of each iteration when $N$ is large, since now the gradient calculations involve only the randomly selected function $f_i$ (i.e., we now use a stochastic-gradient). Pseudocode is given in Algorithm~\ref{alg:stochastic-minimization}. 

\begin{algorithm}
\caption{{\small Stochastic Minimization with Linear Constraints}}
\label{alg:stochastic-minimization}
\begin{algorithmic}[1]
  \STATE Choose $x^0 \in \mathbb{R}^n$ such that $Ax^0 = 0$.
  \FOR{$k \geq 0$}
  	\STATE Select a random edge $(i_k,j_k) \in E$ with probability $p_{i_kj_k}$
  	\STATE Select random integer $l \in [N]$
  	\STATE $x^{k+1} \leftarrow x^k + U_{i_kj_k} Z(f_l,x^k,(i_k,j_k),\alpha_{k}/L_{i_kj_k})$
  	\STATE $k \leftarrow k + 1$
  \ENDFOR
\end{algorithmic}
\end{algorithm}
Notice that the per iteration cost of Algorithm~\ref{alg:stochastic-minimization} is lower than  Algorithm~\ref{alg:composite-minimization} by a factor of $N$. However, as we will see later, this speedup comes at a price of slower convergence rate (Theorem~\ref{thm:stochastic-conv}). Moreover, to ensure convergence, decaying step sizes $\{\alpha_k\}_{k \geq 0}$ are generally chosen. % One such choice is that $\sum_{k=0}^\infty \alpha_k = \infty$ and $\sum_{k=0}^\infty \alpha^2_k < \infty$.

%The algorithm is given in Algorithm~\ref{alg:asynchronous-minimization}
%
%\begin{algorithm}
%\label{alg:asynchronous-minimization}
%\caption{Asynchronous Algorithm}
%\begin{algorithmic}[1]
%  \STATE Choose $x^0 \in \mathbb{R}^n$.
%  \FOR{$k \geq 0$}
%  	\STATE Select a random edge $(i_k,j_k) \in E$ with probability $p_{i_kj_k}$
%  	\STATE Calculate $\xi(f,x_{k},(i_{k},j_{k}))$ (non-blocking)
%  	\STATE $k \leftarrow k + 1$
%  \ENDFOR
%  
%  \STATE On Receiving update $x_{k+1} = x_k + \xi(f,x_{S(k)},(i_{S(k)},j_{S(k)}))$
%\end{algorithmic}
%\end{algorithm}

\vspace*{-8pt}
\section{CONVERGENCE ANALYSIS}
\label{sec:conv}
\vspace*{-8pt}
In this section, we outline convergence results for the algorithms described above. The proofs are somewhat technical, and hence left in the appendix due to lack of space; here we present only the key ideas.

For simplicity, we present our analysis for the following reformulation of the main problem:
\begin{align}
\label{eq:reduced-optimization-problem}
 \min_{y,z}\quad  & f(y,z)+ \nlsum_{i=1}^b h(y_i,z_i) \\
\nonumber \quad \text{ subject to }&\ \ \nlsum_{i=1}^b y_i = 0,
\end{align}
where $y_i \in \mathbb{R}^{n_y}$ and $z_i \in \mathbb{R}^{n_z}$.
Let $y = [y_1^{\top} \cdots y_b^{\top}]^{\top}$ and $z = [z_1^{\top} \cdots z_b^{\top}]^{\top}$.
We use $x$ to denote the concatenated vector $[y^\top z^\top]^\top$ and hence we assume (unless otherwise mentioned)
that the constraint matrix $A$ is defined as follows
\begin{align}
A\left( \begin{array}{c} y \\ z \end{array} \right) = \left( \begin{array}{c} \sum_{i=1}^b y_i \\ 0 \end{array} \right).
\label{eq:a}
\end{align}
It is worth emphasizing that this analysis \emph{does not} result in any loss of generality.
This is due to the fact that Problem~(\ref{eq:opt}) with a general constraint matrix $\tilde{A}$ 
having full row-rank submatrices $\tilde{A}_i$'s
can be rewritten in the form of Problem~(\ref{eq:reduced-optimization-problem})
by using the transformation specified in Section~\ref{sec:reduction} of the appendix. It is important to note that this reduction is presented only for the ease of exposition. For our experiments, we directly solve the problem in Equation~\ref{eq:subproblem}.

Let $\eta_k = \{(i_0,j_0),\dots,(i_{k-1},j_{k-1})\}$ denote  the pairs selected up to iteration $k-1$. 
To simplify notation, assume (without loss of generality) that the Lipschitz constant for the partial gradient $\nabla_i f(x)$ and $\nabla_{ij} f(x)$ is $L$ for all $i \in [n]$ and $(i,j) \in E$.

% % % % TODO (change the definition appropriately)
Similar to \cite{necoara:11}, we introduce a Laplacian matrix ${\cal L} \in \mathbb{R}^{b \times b}$
that represents the communication graph $G$. Since we also have unconstrained variables $z_i$,
we introduce a diagonal matrix ${\cal D} \in \mathbb{R}^{b \times b}$.
%%%%%%%%%%%%%%%%%%%%%%%
\begin{align*}
{\cal L}_{ij} = \left\{
\begin{array}{ll}
\sum_{r \neq i} \frac{p_{ir}}{2L} & i = j\\
-\frac{p_{ij}}{2L} & i \neq j
\end{array}
\right.
& \hfill &
{\cal D}_{ij} = \left\{
\begin{array}{ll}
\frac{p_i}{L} & i = j\\
0 & i \neq j
\end{array}
\right.
\end{align*}

We use $\mathcal{K}$ to denote the concatenation of the Laplacian $\mathcal{L}$ and the diagonal matrix $\mathcal{D}$.
More formally,
$$
\mathcal{K} = \left[
\begin{array}{cc}
{\cal L} \otimes I_{n_y} & 0 \\ 0 & {\cal D} \otimes I_{n_z}
\end{array}\right].
$$
This matrix induces a norm $\|x\|_{\cal K} = \sqrt{x^\top \mathcal{K} x}$ on the \emph{feasible subspace}, with a corresponding dual norm
\begin{align*}
\|x\|_{{\cal K}}^* = \sqrt{x^\top 
\left(\left[
\begin{array}{cc}
{\cal L^+} \otimes I_{n_y} & 0 \\ 0 & {\cal D}^{-1} \otimes I_{n_z}
\end{array}\right]\right) x}
\end{align*}

Let ${X^*}$ denote the set of optimal solutions and let $x^0$ denote the initial point. We define the following distance, which quantifies how far the initial point is from the optimal, taking into account the graph layout and edge  selection probabilities
\begin{align}
\label{eq:r-def}
R(x^0) := \max_{x:f(x) \leq f(x^0)} \max_{x^* \in X^*}
	\left\| x - x^* \right\|_{{\cal K}}^*
\end{align}

\textbf{Note.} Before delving into the details of the convergence results, we would like to draw the reader's attention to the impact of the communication network $G$ on convergence. In general, the convergence results depend on $R(x^0)$, which in turn depends on the Laplacian $\mathcal{L}$ of the graph $G$.  As a rule of thumb, the larger the connectivity of the graph, the smaller the value of $R(x^0)$, and hence, faster the convergence.

\subsection{Convergence results for the smooth case}
We first consider the case when $h = 0$. Here the subproblem at $k^{\text{th}}$ iteration has a very simple update $d_{i_kj_k} = U_{i_k} d^k - U_{j_k} d^k$ where $d^k = \frac{\alpha_k}{2L}(\nabla_{j_k} f(x^k) - \nabla_{i_k} f(x^k))$. 
We now prove that Algorithm~\ref{alg:composite-minimization} attains an $O(1/k)$ convergence rate.

\begin{theorem}
Let $\alpha_k = 1$ for $k \geq 0$, and let $\{x^k\}_{k \geq 0}$ be the sequence generated by Algorithm~\ref{alg:composite-minimization}; let $f^*$ denote the optimal value.  Then, we have the following rate of convergence:
\begin{align*}
\mathbb{E}[f(x^k)]  - f^*  \leq \frac{2R^2(x^0)}{k}
\end{align*}
where $R(x^0)$ is as defined in Equation~\ref{eq:r-def}.
\label{thm:conv}
\end{theorem}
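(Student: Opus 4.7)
The plan is classical for randomized coordinate descent with Laplacian structure: (i) use the quadratic upper bound to get a per-iteration decrease in terms of the gradient quantities that drove the update, (ii) average over the random edge so that the decrease organizes into $\tfrac12\|\nabla f(x^k)\|_{\mathcal K}^2$, (iii) convert this into progress on the suboptimality gap via convexity and a pseudo-inverse Cauchy--Schwarz against the dual norm $\|\cdot\|_{\mathcal K}^*$, and (iv) unroll the resulting quadratic recursion to get the $O(1/k)$ rate.

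For step (i), with $h\equiv 0$ the subproblem \eqref{eq:subproblem} is an equality-constrained quadratic whose closed-form minimizer is the one stated just before the theorem: $d_{y_{i_k}}=(\nabla_{y_{j_k}}f-\nabla_{y_{i_k}}f)/(2L)$, $d_{y_{j_k}}=-d_{y_{i_k}}$, $d_{z_{i_k}}=-\nabla_{z_{i_k}}f/L$, $d_{z_{j_k}}=-\nabla_{z_{j_k}}f/L$. Substituting this back into Lemma~\ref{lem:descent-lemma} yields
\begin{equation*}
f(x^k)-f(x^{k+1}) \ge \tfrac{1}{4L}\|\nabla_{y_{j_k}}f-\nabla_{y_{i_k}}f\|^2 + \tfrac{1}{2L}\bigl(\|\nabla_{z_{i_k}}f\|^2+\|\nabla_{z_{j_k}}f\|^2\bigr).
\end{equation*}
Taking the conditional expectation over $(i_k,j_k)\sim p_{ij}$ and summing, the coefficients assemble exactly into the entries of $\mathcal L$ and $\mathcal D$: the $y$-contribution becomes $\tfrac12\nabla_y f^\top(\mathcal L\otimes I_{n_y})\nabla_y f$ (the factors of $2L$ hidden in the definition of $\mathcal L$ cancel the $1/(4L)$ of the per-step bound, up to the expected factor $\tfrac12$), and the $z$-contribution becomes $\tfrac12\nabla_z f^\top(\mathcal D\otimes I_{n_z})\nabla_z f$. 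Together, $\mathbb E[f(x^k)-f(x^{k+1})\mid\eta_k]\ge \tfrac12\|\nabla f(x^k)\|_{\mathcal K}^2$.

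For step (iii), convexity gives $f(x^k)-f^*\le\langle\nabla f(x^k),x^k-x^*\rangle$. Both iterates are feasible, so $y^k-y^*$ lies in $\operatorname{range}(\mathcal L\otimes I_{n_y})$, which is precisely the condition under which the pseudo-inverse Cauchy--Schwarz $\langle u,v\rangle\le\|u\|_{\mathcal L}\|v\|_{\mathcal L^+}$ is legitimate; combined block-wise with the invertible $\mathcal D$ factor this yields $f(x^k)-f^*\le \|\nabla f(x^k)\|_{\mathcal K}\cdot\|x^k-x^*\|_{\mathcal K}^*$. Because Algorithm~\ref{alg:composite-minimization} is monotone in $f$, $f(x^k)\le f(x^0)$, so by definition $\|x^k-x^*\|_{\mathcal K}^*\le R(x^0)$. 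Squaring and combining with step (ii) produces the conditional recursion $\mathbb E[f(x^{k+1})-f^*\mid\eta_k]\le (f(x^k)-f^*)-(f(x^k)-f^*)^2/(2R^2(x^0))$. Writing $e_k:=\mathbb E[f(x^k)-f^*]$, taking full expectation and applying Jensen ($\mathbb E[X^2]\ge(\mathbb EX)^2$) yields $e_{k+1}\le e_k-e_k^2/(2R^2)$; inverting via $1/(1-t)\ge 1+t$ then telescoping gives $1/e_k\ge k/(2R^2)$, i.e., $e_k\le 2R^2(x^0)/k$.

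The main delicacy is step (iii): $\mathcal L$ is only positive semidefinite (its kernel is the ``consensus'' direction $\mathbf 1\otimes\mathbb R^{n_y}$), so $\|\cdot\|_{\mathcal L^+}$ is a bona fide norm only on $\operatorname{range}(\mathcal L)$. The saving observation is that feasibility of both $x^k$ and $x^*$ places their difference exactly in this range, which legitimizes the weighted Cauchy--Schwarz and lets $R(x^0)$ appear with no infinities. The rest is bookkeeping: the $1/(2L)$ baked into the definitions of $\mathcal L$ and $\mathcal D$ exactly cancels the $2L$ that appears when resumming squared gradient differences over edges, so one does not need to tune constants by hand.
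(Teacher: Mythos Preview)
Your proposal is correct and follows essentially the same approach as the paper: both obtain the conditional descent inequality $\mathbb{E}[f(x^{k+1})\mid\eta_k]\le f(x^k)-\tfrac12\|\nabla f(x^k)\|_{\mathcal K}^2$, combine it with convexity and a $\mathcal K$-norm Cauchy--Schwarz bound $f(x^k)-f^*\le R(x^0)\|\nabla f(x^k)\|_{\mathcal K}$, and telescope the resulting quadratic recursion. Your write-up is in fact more careful than the paper's in two places: you explicitly justify why the pseudo-inverse Cauchy--Schwarz is valid (feasibility forces $y^k-y^*\in\operatorname{range}(\mathcal L\otimes I_{n_y})$, which the paper uses silently), and you make the Jensen step $\mathbb{E}[(f(x^k)-f^*)^2]\ge(\mathbb{E}[f(x^k)-f^*])^2$ explicit when passing to the unconditional recursion, whereas the paper simply writes $\Delta_{k+1}\le\Delta_k-\Delta_k^2/(2R^2)$ without comment.
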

\begin{proof}[Proof Sketch]
We first prove that each iteration leads to descent in expectation. More formally, we get
\begin{align*}
\mathbb{E}_{i_kj_k}[f(x^{k+1})|\eta_k] \leq f(x^k) -\tfrac{1}{2} \nabla f(x^k)^\top \mathcal{K} \nabla f(x^k).
\end{align*}
The above step can be proved using Lemma~\ref{lem:descent-lemma}. Let $\Delta_k = \mathbb{E}[f(x^k)] - f^*$. It can be proved that
\begin{align*}
\frac{1}{\Delta_k} \leq \frac{1}{\Delta_{k+1}} - \frac{1}{2R^2(x^0)}
\end{align*}
This follows from the fact that 
\begin{align*}
f(x^{k+1}) - f^* & \leq \|x^k - x^*\|_{\cal K}^* \| \nabla f(x^k) \|_{\cal K} \\
& \leq R(x^0)\| \nabla f(x^k) \|_{\cal K} \quad \forall k \geq 0
\end{align*}
Telescoping the sum, we get the desired result.
\end{proof}

Note that Theorem~\ref{thm:conv} is a strict generalization of the analysis in \citep{necoara:11} and \citep{Necoara14} due to: (i) the presence of unconstrained variables $z$; and  (ii) the presence of a non-decomposable objective function. it is also worth emphasizing that our convergence rates improve upon those of \citep{Necoara14}, since they do not involve an exponential dependence of the form $b^m$ on the number of constraints. 

We now turn our attention towards the convergence analysis of our asynchronous algorithm
under a consistent reading model \citep{Liu13}. In this context we would like to emphasize that while our theoretical analysis 
assumes consistent reads, we do not enforce this assumption in our experiments.

\begin{theorem}
Let $\rho > 1$ and $\alpha_k = \alpha$ be such that $\alpha < 2/(1+\tau+\tau \rho^\tau)$ and 
$\alpha < (\rho - 1)/(\sqrt{2}(\tau+2)(\rho^{\tau+1}+\rho))$. Let $\{x_k\}_{k \geq 0}$ be the sequence generated by asynchronous algorithm using step size $\alpha_k$ and let $f^*$ denote the optimal value. Then, we have the following rate of convergence for the expected values of the objective function
\begin{align*}
\mathbb{E}[f(x_k)]  - f^*  \leq \frac{R^2(x^0)}{\mu k}
\end{align*}
where $R(x^0)$ is as defined in Equation~\ref{eq:r-def} and $
\mu = \frac{\alpha_k^2}{2}\left(\frac{1}{\alpha_k} - \frac{1+\tau+\tau\rho^\tau}{2}\right)
$.
\label{thm:asyn-conv}
\end{theorem}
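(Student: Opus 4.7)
The plan is to mirror the structure of the proof of Theorem~\ref{thm:conv} but with an extra layer that controls the error introduced by using a stale gradient $\nabla f(x^{D(k)})$ in place of $\nabla f(x^k)$. First I would start from Lemma~\ref{lem:descent-lemma} applied to the pair $(i_k,j_k)$ with stepsize $\alpha_k=\alpha$: since $d^k$ is built from $\nabla_{i_kj_k} f(x^{D(k)})$, splitting $\nabla_{i_kj_k} f(x^k) = \nabla_{i_kj_k} f(x^{D(k)}) + \big(\nabla_{i_kj_k} f(x^k) - \nabla_{i_kj_k} f(x^{D(k)})\big)$ lets me write the one-step decrease as a ``synchronous'' descent term $-\tfrac{1}{2}\nabla f(x^{D(k)})^\top \mathcal{K} \nabla f(x^{D(k)})$ plus an error term that is inner-producted with the difference of gradients and a quadratic in $\|d^k\|$.

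Next I would bound the error term. Using $x^k-x^{D(k)} = \sum_{l=D(k)}^{k-1} d^l$ and the block-Lipschitz property (Assumption~\ref{ass:lipschitz-gradient}), the gradient drift is controlled by $L$ times a telescoped sum of past updates. Applying Young's inequality, taking conditional expectation over $(i_k,j_k)$, and then total expectation turns the cross term into a combination of $\mathbb{E}\|\nabla f(x^{D(k)})\|_{\mathcal{K}}^2$ and $\mathbb{E}\|\nabla f(x^{D(l)})\|_{\mathcal{K}}^2$ for $l\in[D(k),k-1]$, with coefficients scaled by $\alpha$ and the delay bound $\tau$.

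The heart of the argument, and the step I expect to be the main obstacle, is a ``geometric delay'' lemma of the Liu--Wright--R\'e type: under the stepsize conditions $\alpha < 2/(1+\tau+\tau\rho^\tau)$ and $\alpha < (\rho-1)/(\sqrt{2}(\tau+2)(\rho^{\tau+1}+\rho))$, one can show by induction
\[
\mathbb{E}\|\nabla f(x^{k+1})\|_{\mathcal{K}}^2 \;\leq\; \rho\, \mathbb{E}\|\nabla f(x^{k})\|_{\mathcal{K}}^2,
\]
so that for any $l\in[D(k),k]$ the stale gradient norm is within a factor $\rho^\tau$ of the current one. The two stepsize conditions are precisely what is needed so that (i) the induction closes and (ii) the sum of error terms, once replaced by $\rho^\tau$ times the current gradient norm, still leaves a strictly positive coefficient $\mu = \tfrac{\alpha^2}{2}\big(\tfrac{1}{\alpha}-\tfrac{1+\tau+\tau\rho^\tau}{2}\big)$ in front of $\mathbb{E}\|\nabla f(x^{D(k)})\|_{\mathcal{K}}^2$. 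Establishing this lemma will require carefully grouping the contributions of $d^{D(k)},\dots,d^{k-1}$ and invoking the two assumed bounds on $\alpha$ in turn.

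Once the per-step contraction $\mathbb{E}[f(x^{k+1})]-f^* \leq \mathbb{E}[f(x^k)]-f^* - \mu\,\mathbb{E}\|\nabla f(x^k)\|_{\mathcal{K}}^2$ is in hand, I would close the argument exactly as in Theorem~\ref{thm:conv}. Using $f(x^k)-f^*\leq \|x^k-x^*\|_{\mathcal{K}}^{*}\,\|\nabla f(x^k)\|_{\mathcal{K}}\leq R(x^0)\,\|\nabla f(x^k)\|_{\mathcal{K}}$ (which is preserved because atomic updates keep iterates feasible and within the level set of $x^0$ in expectation), setting $\Delta_k = \mathbb{E}[f(x^k)]-f^*$ yields $\Delta_k-\Delta_{k+1}\geq \mu\Delta_k^2/R^2(x^0)$, from which $1/\Delta_{k+1} - 1/\Delta_k \geq \mu/R^2(x^0)$ and telescoping gives $\Delta_k \leq R^2(x^0)/(\mu k)$.
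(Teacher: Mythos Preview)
Your overall architecture matches the paper's: descent lemma plus a stale/fresh gradient split, a Liu--Wright--R\'e style geometric-ratio lemma proved by induction under the two step-size conditions, and then the same $\Delta_k$ telescoping as in Theorem~\ref{thm:conv}. Two points, however, need correction.

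First, the direction of your key lemma is reversed. You wrote $\mathbb{E}\|\nabla f(x^{k+1})\|_{\mathcal{K}}^2 \le \rho\,\mathbb{E}\|\nabla f(x^{k})\|_{\mathcal{K}}^2$, which telescopes to \emph{current $\le \rho^\tau$ past}. But to absorb the error sum $\sum_{t=1}^{\tau}\mathbb{E}\|d^{k-t}\|^2$ into the $\mathbb{E}\|d^k\|^2$ term and obtain the coefficient $\tfrac{1}{\alpha}-\tfrac{1+\tau+\tau\rho^\tau}{2}$, you need the opposite bound, \emph{past $\le \rho\cdot$ current}. The paper proves exactly
\[
\mathbb{E}\bigl[\|d_{i_{k-1}j_{k-1}}^{k-1}\|^2\bigr]\;\le\;\rho\,\mathbb{E}\bigl[\|d_{i_kj_k}^{k}\|^2\bigr],
\]
by writing $\|w^{k-1}\|^2-\|w^{k}\|^2$ (where $w^t_{ij}=\sqrt{p_{ij}}\|d^t_{ij}\|$), bounding $\|d_{ij}^{k}-d_{ij}^{k-1}\|$ via Lipschitzness by $\sqrt{2}\alpha\|x^{D(k)}-x^{D(k-1)}\|$, and closing the induction using the second step-size condition. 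Your verbal description (``stale within $\rho^\tau$ of current'') is the right intent, but the displayed inequality as written does not deliver it.

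Second, the quantity on which the induction runs matters. The error terms that appear after Cauchy--Schwarz/Young are $\|d^{k-t}\|^2$, whose expectation over the random edge is proportional to $\|\nabla f(x^{D(k-t)})\|_{\mathcal K}^2$, not $\|\nabla f(x^{k-t})\|_{\mathcal K}^2$. The paper therefore runs the induction directly on $\mathbb{E}\|d^k\|^2$; the per-step decrease then comes out as $-\mu\,\mathbb{E}\|\nabla f(x^{D(k)})\|_{\mathcal K}^2$, and an extra (easy) step---using that the method is a descent method in expectation, hence $\mathbb{E}[(f(x^{D(k)})-f^*)^2]\ge \mathbb{E}[(f(x^{k})-f^*)^2]$---is needed before the $1/\Delta_k$ telescoping. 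Your plan jumps straight to $-\mu\,\mathbb{E}\|\nabla f(x^{k})\|_{\mathcal K}^2$, which skips this detour.
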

\begin{proof}[Proof Sketch] For ease of exposition, we describe the case where the unconstrained variables $z$ are absent. The analysis of case with $z$ variables can be carried out in a similar manner. Let $D(k)$ denote the iterate of the variables used in the $k^{\text{th}}$ iteration (the existence of $D(k)$ follows from the consistent reading assumption). Let $d^k = \frac{\alpha_k}{2L} \left(\nabla_{y_{j_k}} f(x^{D(k)}) - \nabla_{y_{i_k}} f(x^{D(k)}) \right)$ and $d_{i_kj_k}^k = x^{k+1} - x^{k} = U_{i_k}d^k - U_{j_k}d^k$. Using Lemma~\ref{lem:descent-lemma} and the assumption that staleness  in the variables is bounded by $\tau$, i.e., $k - D(k) \leq \tau$ and definition of $d_{ij}^k$, we can derive the following bound:
\begin{align*} 
\mathbb{E}[f(&x^{k+1})] \leq \mathbb{E}[f(x^k)] - L \left(\frac{1}{\alpha_k} - \frac{1+\tau}{2}\right) \mathbb{E}[\|d_{i_kj_k}^k\|^2] \\
& \quad \quad \quad  \quad + \frac{L}{2} \mathbb{E}\left[\sum_{t=1}^{\tau} \|d_{i_{k-t}j_{k-t}}^{k-t}\|^2\right].
\end{align*}
In order to obtain an upper bound on the norms of $d^k_{i_kj_k}$, we prove that 
\begin{align*}
\mathbb{E}\left[ \|d_{i_{k-1}j_{k-1}}^{k-1}\|^2\right] \leq \rho \mathbb{E}\left[ \|d_{i_{k}j_{k}}^k\|^2\right]
\end{align*}
This can proven using mathematical induction. Using the above bound on $\|d_{i_{k}j_{k}}^k\|^2$, we get 
\begin{align*}
& \mathbb{E}[f(x^{k+1})] \leq \\
& \quad \mathbb{E}[f(x^k)] - L \left(\frac{1}{\alpha_k} - \frac{1+\tau + \tau\rho^{\tau}}{2}\right) \mathbb{E}[\|d_{i_kj_k}^k\|^2]
\end{align*}
This proves that the method is a descent method in expectation. Following similar analysis as Theorem~\ref{thm:conv}, we get the required result.
\end{proof}

Note the dependence of convergence rate on the staleness bound $\tau$. For larger values of $\tau$, the stepsize $\alpha_k$ needs to be decreased to ensure convergence, which in turn slows down the convergence rate of the algorithm. Nevertheless, the convergence rate remains $O(1/k)$. 

The last smooth case we analyze is our stochastic algorithm.
\begin{theorem}
\label{thm:stoch}
Let $\alpha_i = \sqrt{\Delta_0 L}/(M\sqrt{i+1})$ for $i \geq 0$ in Algorithm~\ref{alg:stochastic-minimization}. Let $\{x^k\}_{k \geq 0}$ be the sequence generated by Algorithm~\ref{alg:stochastic-minimization} and let $f^*$ denote the optimal value. We denote $\bar{x}^k = \arg\min_{0 \leq i \leq k} f(x^k)$. Then, we have the following rate of convergence for the expected values of the objective:
\begin{align*}
\mathbb{E}[f(\bar{x}^k)]  - f^*  \leq O\left(\frac{1}{\sqrt[4] k}\right)
\end{align*}
%\begin{align*}
%\mathbb{E}[f(\bar{x}^k)]  - f^*  \leq R(x^0)\sqrt[4]{\frac{M^2\Delta_0}{Lk}}
%\end{align*}
where $\Delta_0 = f(x^0) - f^*$.
\label{thm:stochastic-conv}
\end{theorem}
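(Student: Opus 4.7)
The plan is to mimic the descent-lemma argument from the sketch of Theorem~\ref{thm:conv} while tracking the extra variance introduced by sampling $f_l$ in place of $f$. First I would apply Lemma~\ref{lem:descent-lemma} to $f$ between $x^k$ and $x^{k+1}$. The increment $\Delta x^k := x^{k+1} - x^k$ lives only in the pair of blocks $(i_k,j_k)$ and equals $\tfrac{\alpha_k}{2L}(U_{i_k}-U_{j_k})\bigl(\nabla_{j_k}f_l(x^k)-\nabla_{i_k}f_l(x^k)\bigr)$, so the descent lemma's right-hand side becomes linear and quadratic in this stochastic direction.

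Second I would take the conditional expectation over the three independent draws $i_k,j_k,l$ given $\eta_k$. By unbiasedness $\mathbb{E}_l[\nabla f_l(x)]=\nabla f(x)$, the linear part averages to exactly the Laplacian quadratic form driving Theorem~\ref{thm:conv}, namely $-\alpha_k\,\nabla f(x^k)^\top\mathcal{K}\,\nabla f(x^k) = -\alpha_k\|\nabla f(x^k)\|_{\mathcal{K}}^2$. The quadratic term splits into a deterministic ``signal'' piece also proportional to $\|\nabla f(x^k)\|_{\mathcal{K}}^2$ and a stochastic ``variance'' piece bounded by a constant multiple of $\alpha_k^2 M^2$ via the second-moment bound $\mathbb{E}_l\|\nabla f_l(x)\|^2\leq M^2$. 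For the prescribed decaying $\alpha_k$ the signal piece of the quadratic term is dominated by the linear descent term, yielding
$$\mathbb{E}[f(x^{k+1})\mid\eta_k]\;\leq\;f(x^k)\;-\;\tfrac{\alpha_k}{2}\|\nabla f(x^k)\|_{\mathcal{K}}^2\;+\;C\alpha_k^2 M^2.$$

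Third, I would convert the gradient norm into a function-gap bound exactly as in Theorem~\ref{thm:conv}, using $\Delta_k := f(x^k)-f^*\leq R(x^0)\|\nabla f(x^k)\|_{\mathcal{K}}$ together with Jensen's inequality to obtain $\mathbb{E}[\|\nabla f(x^k)\|_{\mathcal{K}}^2]\geq(\mathbb{E}[\Delta_k])^2/R^2(x^0)$. Taking total expectation of the recurrence and telescoping over $i=0,\dots,k-1$ gives
$$\frac{1}{2R^2(x^0)}\sum_{i=0}^{k-1}\alpha_i(\mathbb{E}[\Delta_i])^2\;\leq\;\Delta_0\;+\;CM^2\sum_{i=0}^{k-1}\alpha_i^2.$$
Since $\bar{x}^k$ minimizes $f(x^i)$ over $i\leq k$, we have $\mathbb{E}[f(\bar{x}^k)]\leq\min_i\mathbb{E}[f(x^i)]=:\bar\Delta_k$, and hence $\bar\Delta_k^2\sum_i\alpha_i\leq\sum_i\alpha_i(\mathbb{E}[\Delta_i])^2$. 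Substituting $\alpha_i=\sqrt{\Delta_0 L}/(M\sqrt{i+1})$, using the standard estimates $\sum_{i=0}^{k-1}\alpha_i=\Theta(\sqrt{\Delta_0 L k}/M)$ and $\sum_{i=0}^{k-1}\alpha_i^2=O(\Delta_0 L\log k/M^2)$, and taking a square root yields the advertised $O(1/\sqrt[4]{k})$ rate (up to a logarithmic factor absorbed into the $O(\cdot)$).

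The main obstacle I expect is the variance/signal decomposition in Step~2: one has to check that (i) the cross term between $\nabla f(x^k)$ and the stochastic-gradient noise vanishes under $\mathbb{E}_l$, so that the signal really is $-\alpha_k\|\nabla f\|_{\mathcal{K}}^2$ rather than something weaker, and (ii) the squared stochastic direction admits a clean $M^2$ bound after averaging over $(i_k,j_k,l)$. A secondary subtlety worth flagging is the gap between the resulting $O(k^{-1/4})$ and the usual $O(k^{-1/2})$ SGD rate: this extra square-root loss is the price of the Laplacian only lower-bounding $\|\nabla f(x^k)\|_{\mathcal{K}}^2$ by $\Delta_k^2/R^2(x^0)$ (quadratically in the gap), so telescoping produces $\bar\Delta_k^2$ rather than $\bar\Delta_k$ on the left-hand side.
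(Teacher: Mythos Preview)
Your proposal is correct and follows essentially the same route as the paper: descent lemma, unbiasedness of $\nabla f_l$ to recover the Laplacian quadratic form $-\alpha_k\|\nabla f(x^k)\|_{\mathcal{K}}^2$, the gap bound $f(x^k)-f^*\le R(x^0)\|\nabla f(x^k)\|_{\mathcal{K}}$, Jensen, telescoping, and finally taking a square root after substituting the prescribed step sizes.

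One small simplification relative to your Step~2: the paper does \emph{not} split the quadratic term into a ``signal'' piece and a ``variance'' piece. It simply bounds the entire squared update via $\mathbb{E}_{l}\bigl[\|d_{i_kj_k}^{l}\|^2\bigr]\le 8M^2\alpha_k^2/L^2$ using $\|\nabla f_l\|\le M$, which immediately gives
\[
\mathbb{E}[f(x^{k+1})\mid\eta_k]\;\le\;f(x^k)\;-\;\alpha_k\|\nabla f(x^k)\|_{\mathcal{K}}^2\;+\;\tfrac{4M^2\alpha_k^2}{L}.
\]
This avoids having to argue that ``the signal piece of the quadratic is dominated by the linear descent term for small $\alpha_k$''; under the standing assumption that stochastic gradients are bounded, the whole quadratic is already $O(\alpha_k^2)$. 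Your decomposition would also work, but it is unnecessary here.
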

The convergence rate is $O(1/k^{1/4})$ as opposed to $O(1/k)$ of Theorem~\ref{thm:conv}. 
On the other hand, the iteration complexity is lower by a factor of $N$; this kind of tradeoff is typical in stochastic algorithms,
where the slower rate is the price we pay for a lower iteration complexity. 
We believe that the convergence rate can be improved to $O(1/\sqrt{k})$, the rate generally observed in stochastic algorithms, by a more careful analysis.

\subsection{Nonsmooth case}
We finally state the convergence rate for the nonsmooth case ($h\not\equiv0$) in the case of a sum constraint. 
Similar to \cite{Necoara14}, we assume $h$ is coordinatewise separable (i.e. 
we can write $h(x) = \sum_{i=1}^b \sum_{j} x_{ij}$), where $x_{ij}$
is the $j^{th}$ coordinate in the $i^{th}$ block.
For this analysis, we assume that the graph $G$ is a clique 
\footnote{We believe our results also easily extend to the
general case along the lines of \cite{richTak11,richTak12,richtarik:13}, using the concept of
\emph{Expected Separable Overapproximation} (ESO). Moreover, the assumption is not totally impractical,
e.g., in a multicore setting with a zero-sum constraint (i.e. $A_i = I$), the clique-assumption introduces little cost.} with uniform probability, i.e., $\lambda = p_{ij} = 2/b(b-1)$.

\begin{theorem}
\label{thm:non-smooth-conv}
Assume $Ax = \sum_i A_i x_i$. 
Let $\{x^k\}_{k \geq 0}$ be the sequence generated by Algorithm~\ref{alg:composite-minimization} and let $F^*$ denote the optimal value. Assume that the graph $G$ is a clique with uniform probability. Then we have the following:
$$
\mathbb{E}[F(x^k) - F^*] \leq \frac{b^2 L R^2(x^0)}{2k + \frac{b^2 L R^2(x^0)}{\Delta_0}},
$$
where $R(x^0)$ is as defined in Equation~\ref{eq:r-def}.
\end{theorem}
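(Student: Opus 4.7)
The plan is to establish a per-iteration expected-descent inequality and then telescope to obtain the claimed $O(1/k)$ rate. First, combining Lipschitz smoothness of the block-gradients of $f$ (Assumption~\ref{ass:lipschitz-gradient} and Lemma~\ref{lem:descent-lemma}) with the optimality of the update produced by~\eqref{eq:subproblem}, I would show that for any feasible $\hat d$ (i.e., $A_{ij}\hat d = 0$),
\begin{equation*}
F(x^{k+1}) \le F(x^k) + \nabla_{ij} f(x^k)^\top \hat d + \tfrac{L}{2}\|\hat d\|^2 + h(x^k + U_{ij}\hat d) - h(x^k).
\end{equation*}

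Next, for each pair $(i,j)$ in the clique I would plug in a specific feasible $\hat d_{ij}(\theta)$ parameterized by $\theta\in[0,1]$ and built from the gap $x^* - x^k$. The natural symmetric-swap choice adapted to the sum constraint is $\hat d_{ij,i}(\theta) = (\theta/b)[(y^*_i-y^k_i)-(y^*_j-y^k_j)]$ and $\hat d_{ij,j}(\theta) = -\hat d_{ij,i}(\theta)$, with analogous unconstrained choices on the $z$-blocks. Using $\sum_i(y^*_i-y^k_i)=0$, two identities are easy to verify: aggregating the swap directions recovers the full gap, $\sum_{j\ne i}\hat d_{ij,i}(\theta) = \theta(y^*_i-y^k_i)$, and the summed squared norm satisfies $\sum_{(i,j)}\|\hat d_{ij}(\theta)\|^2 = (2\theta^2/b)\|y^*-y^k\|^2$.

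Averaging over a uniform edge draw (probability $\lambda = 2/(b(b-1))$) and substituting into the descent bound: the linear term in $\nabla f$ telescopes to $\lambda\theta\nabla f(x^k)^\top(x^*-x^k)$, which convexity of $f$ bounds by $\lambda\theta(f^* - f(x^k))$; the quadratic term is at most a constant multiple of $\lambda\theta^2 L R^2(x^0)$ via the definition of $R$ in the clique-Laplacian norm; and for the nonsmooth term I would invoke the coordinate-wise separability of $h$ to reduce to scalar subproblems per coordinate, using that the cross-couplings between $y^*_j-y^k_j$ and $y^*_i-y^k_i$ in the swap direction average out across pairs (again via $\sum_i(y^*_i-y^k_i)=0$), yielding an upper bound of the form $\lambda\theta(h^* - h(x^k))$. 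Combining these yields
\begin{equation*}
\mathbb{E}[F(x^{k+1}) \mid x^k] - F^* \le (1-\lambda\theta)(F(x^k)-F^*) + C\lambda\theta^2 L R^2(x^0)
\end{equation*}
for a numerical constant $C$. Minimizing over $\theta\in[0,1]$ and taking total expectation gives the recursion $1/\mathbb{E}[\Delta_{k+1}] \ge 1/\mathbb{E}[\Delta_k] + 2/(b^2 L R^2(x^0))$, which telescopes from $\Delta_0$ to the stated bound.

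The main obstacle is the nonsmooth step: the symmetric-swap direction does not make $y^k_i + \hat d_{ij,i}(\theta)$ a convex combination of $y^k_i$ and $y^*_i$ in isolation, because of the $(y^*_j-y^k_j)$ cross-term from the partner block. A naive use of convexity of $h_i$ would introduce evaluations of $h_i$ at points that cannot be controlled. The coordinate-wise separability of $h$ is essential here: it reduces the argument to a per-coordinate scalar sum constraint on $b$ blocks, and the clique/uniform-probability structure makes the undesirable cross-terms cancel on average. This is precisely where the restriction to sum constraints and the clique assumption enter, and it is also the reason that the $b^m$ factor appearing in~\cite{Necoara14} is eliminated in our bound.
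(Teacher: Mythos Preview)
Your treatment of the linear and quadratic parts is correct: the symmetric-swap directions $\hat d_{ij}(\theta)$ do aggregate to $\theta(x^*-x^k)$ and their squared norms sum to $(2\theta^2/b)\|y^*-y^k\|^2$, exactly as you claim. The gap is in the nonsmooth step, and it is not a gap that coordinate-wise separability or the clique symmetry can close.

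Here is a concrete obstruction. Take $b=3$, scalar blocks, $y^k=(0,0,0)$, $y^*=(0,1,-1)$, and $h_1(t)=\mathbb{I}[t\ge 0]$ with $h_2=h_3\equiv 0$. Both $x^k$ and $x^*$ are feasible and $h(x^k)=h(x^*)=0$. For the pair $(1,2)$ your direction gives $\hat d_{12,1}(\theta)=(\theta/3)(\delta_1-\delta_2)=-\theta/3$, so $y_1$ moves to $-\theta/3<0$ and $h(x^k+U_{12}\hat d_{12})=+\infty$. Thus the expected $h$-increment is $+\infty$ while your target bound $\lambda\theta(h(x^*)-h(x^k))$ equals $0$. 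The ``cross-terms cancel on average'' intuition fails because $h$ is nonlinear: you cannot move the expectation over $j$ inside $h_i$.

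The paper's proof avoids this by using a different pairwise surrogate. It first forms the \emph{full} proximal step $\tilde x^{k+1}=\arg\min_{Ax=0}\{\langle\nabla f(x^k),x-x^k\rangle+\tfrac{L}{2}\|x-x^k\|^2+h(x)\}$, sets $d=\tilde x^{k+1}-x^k$, and then invokes a \emph{conformal decomposition}: any feasible $d$ can be written as $\sum_{i\neq j} d'_{ij}$ where each $d'_{ij}$ is supported on blocks $(i,j)$, satisfies $Ad'_{ij}=0$, and is coordinate-wise sign-consistent with $d$. Conformality buys two inequalities that your swap directions do not enjoy: (i) $\sum_{i\neq j}\|d'_{ij}\|^2\le\|d\|^2$, since conformal vectors have nonnegative inner products; and (ii) for each scalar convex $h_c$ with $h_c(x^k_c)$ finite, the increments along same-sign steps are superadditive, giving $\sum_{i\neq j}[h(x^k+d'_{ij})-h(x^k)]\le h(\tilde x^{k+1})-h(x^k)$. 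In particular each $x^k+d'_{ij}$ lies coordinate-wise between $x^k$ and $\tilde x^{k+1}$, so indicator constraints are never violated. Plugging $d'_{i_kj_k}$ (instead of your $\hat d_{i_kj_k}$) into the descent inequality, using optimality of the actual update $d_{i_kj_k}$, and then minimizing over $y$ on the line segment to $x^*$ yields the recursion and the stated rate. The conformal decomposition, not an averaging identity, is the mechanism that removes the $b^m$ factor while keeping the $h$-term under control.
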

This convergence rate is a generalization of the convergence rate
obtained in \citet{Necoara14} for a single linear constraint (see Theorem 1 in \citep{Necoara14}).
It is also an improvement of the rate obtained in \citet{Necoara14} for general linear constraints (see Theorem 4 in \citep{Necoara14}) when applied to the special case of a sum constraint. 
Our improvement comes in the form of a tractable constant, as opposed to the exponential dependence $O(b^m)$ shown in~\citep{Necoara14}.

\vspace*{-8pt}
\section{APPLICATIONS}
\vspace*{-6pt}

To gain a better understanding of our approach, we state some applications of interest, while discussing details of Algorithm~\ref{alg:composite-minimization} and Algorithm~\ref{alg:stochastic-minimization} for them. While there are many applications of problem~\eqref{eq:opt}, due to lack of space we only mention a few prominent ones here.

{\bf Support Vector Machines:}
The SVM dual (with bias term) assumes the form~(\ref{eq:opt}); specifically, 
\begin{eqnarray}
\nonumber \min_{\alpha} \tfrac{1}{2}\nlsum_{i,j}\alpha_i \alpha_j y_i y_j {z}_i^\top  z_j - \nlsum_{i=1}^n \alpha_i \ \ \\
s.t. \nlsum_i \alpha_iy_i = 0, \ \ 0 \leq \alpha_i \leq C \quad \forall\ i \in [n].
\label{eq:svm}
\end{eqnarray}
Here, $z_i$ denotes the feature vector of the $i^{th}$ training example
and $y_i \in \{1,-1\}$ denotes the corresponding label.
By letting $f(\alpha) = \frac{1}{2}\sum_{i,j}\alpha_i \alpha_j y_i y_j z_i^\top z_j - \sum_i\alpha_i$ and $h(\alpha) = \sum_i \mathbb{I}(0\leq \alpha_i \leq C)$ and $A = [y_1,\dots,y_n]$ this problem can be written in form of Problem~(\ref{eq:opt}). Using Algorithm~\ref{alg:composite-minimization} for SVM involves solving a sub-problem similar to one used in SMO in the scalar case (i.e., $\alpha_i \in \mathbb{R}$) and can be solved in linear time in the block case (see \citep{Pardalos}).

{\bf Generalized Lasso:}
The objective is to solve the following optimization problem. 
\begin{eqnarray*}
\min\nolimits_{\beta}\quad \tfrac{1}{2}\|Y - X\beta\|_2^2	+ \lambda \|D\beta\|_1
\end{eqnarray*}
where $Y\in \reals^N$ denotes the output, $X \in \reals^{N\times n}$ is the input and $D \in \reals^{q\times n}$ represents a specified penalty matrix. This problem can also be seen as a specific case of Problem~(\ref{eq:opt}) by introducing an auxiliary variable $t$ and slack variables $u,v$. Then, $f(\beta,t) = \frac{1}{2}\|Y - X\beta\|_2^2 + \sum_i t_i$, $h(u,v) = \mathbb{I}(u \geq 0) + \mathbb{I}(v \geq 0)$ and, $t - D\beta - u = 0$  and $t + D\beta - v = 0$ are the linear constraints. To solve this problem, we can use either Algorithm~\ref{alg:composite-minimization} or Algorithm~\ref{alg:stochastic-minimization}. In general, optimization of convex functions on a structured convex polytope can be solved in a similar manner.

{\bf Unconstrained Separable Optimization:} Another interesting application is for unconstrained separable optimization. For any problem $\min_x \sum_i {f_i(x)}$---a form generally encountered across machine learning---can be rewritten using variable-splitting as $\min_{\{x_i = x, \forall i \in [N]\}} f_i(x_i)$. Solving the problem in distributed environment requires considerable synchronization (for the consensus constraint), which can slow down the algorithm significantly. However, the dual of the problem is
\begin{align*}
\min_{\lambda} \sum_i {f_i^*(\lambda_i)} \quad 
s.t \ \ \nlsum_{i=1}^N \lambda_i = 0.
\end{align*}
where $f_i^*$ is the Fenchel conjugate of $f_i$. This reformulation perfectly fits our framework and can be solved in an asynchronous manner using the procedure described in Section~\ref{sec:async}.

Other interesting application include constrained least square problem, multi-agent planning problems, resource allocation---see \cite{necoara:11,Necoara14} and references therein for more examples.

%%% Local Variables: 
%%% mode: latex
%%% TeX-master: "coordNips"
%%% TeX-PDF-mode: t
%%% End: 

\section{EXPERIMENTS}
In this section, we present our empirical results. In particular, we examine the behavior of random coordinate descent 
algorithms analyzed in this paper under different communication constraints and concurrency conditions.
\footnote{All experiments were conducted on a Google Compute Engine
virtual machine of type ``n1-highcpu-16'', which comprises 16 virtual CPUs and 14.4 GB of memory.
For more details, please refer to \url{https://cloud.google.com/compute/docs/machine-types\#highcpu}.
}

\vspace*{-8pt}
\subsection{Effect of Communication Constraints}
\vspace*{-8pt}

Our first set of experiments test the affect of the connectivity of the graph on the convergence rate. In particular, recall that the convergence analysis established in  Theorem~\ref{thm:conv} depends on the Laplacian of the communication graph. In this experiment we demonstrate how communication constraints affect convergence in practice.
We experiment with the following graph topologies of graph $G$: 
Ring, Clique, Star + Ring (i.e., the union of edges of a star and a ring) and Tree + Ring. On each layout we run the sequential Algorithm~\ref{alg:composite-minimization} on the following
quadratic problem
\begin{align}
\nonumber \min & \quad C \nlsum_{i=1}^N \|x_i - (i\ \mathrm{mod}\ 10) \bm{1}\|^2 \ \ \\
& s.t. \ \ \nlsum_{i=1}^N A_ix_i = 0,
\label{eq:synth}
\end{align}

Note the decomposable structure of the problem. For this experiment, we use $N = 1000$ and $x_i \in \mathbb{R}^{50}$. 
We have 10 constraints whose coefficients are randomly generated from $U[0,1]$
and we choose $C$ such that the objective evaluates to 1000 when $x = 0$.

\begin{figure}{h}
  \centering
  \includegraphics[width=5.5cm,height=4cm]{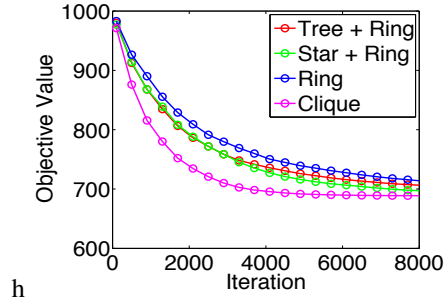}
  \vspace*{-8pt}
  \caption{\small Objective value vs. number of iterations for different graph topologies. Note that larger the connectivity of the graph, faster is the convergence.}
  \label{fig:layout}
\end{figure}

The results for Algorithm~\ref{alg:composite-minimization} on each topology for 10000 iterations are shown in Figure~\ref{fig:layout}. The results clearly show that better connectivity implies better convergence rate. 
Note that while the clique topology has significantly better convergence than other topologies, acceptable long-term  performance can be achieved by much sparser topologies such as Star + Ring and Tree + Ring.

Having a sparse communication graph is important to lower the cost of a distributed system. Furthermore, it is worth mentioning that the sparsity of the communication graph is also important in a multicore setting;  since Algorithm \ref{alg:composite-minimization} requires computing $(A_i A_i^\top + A_j A_j^\top)^+$ for each communicating pair of nodes ($i$, $j$). Our analysis shows that this computation takes a significant portion of the running time and hence it is essential to minimize the  number of variable pairs that are allowed to be updated.

\subsection{Concurrency and Synchronization}
\label{sec:sync}

As seen earlier, compared to Tree + Ring, Star + Ring is a low diameter layout (diameter = 2). Hence, in a sequential setting, it indeed results in a faster convergence. However, Star + Ring requires a node to be connected to all other nodes. This high-degree node could be a contention point in a parallel setting. We test the performance of our asynchronous algorithm in this setting. To assess how the performance would be affected with such contention and how asynchronous updates would increase performance, we conduct
another experiment on the synthetic problem \eqref{eq:synth}
but on a larger scale ($N = 10000$, $x_i \in \mathbb{R}^{100}$, 100 constraints).

Our concurrent update follows a master/slave scheme.
Each thread performs a loop where in each iteration it
elects a master $i$ and slave $j$ and then applies the following sequence of actions:
\begin{enumerate}
\item Obtain the information required for the update from the master (i.e., information for calculating the gradients used for solving the subproblem).
\item Send the master information to the slave, update the slave variable and get back the information needed to update the master.
\item Update the master based (only) on the information obtained from steps 1 and 2.
\end{enumerate}
We emphasize that the master is not allowed to read its own state at step 3 except to apply an increment, which is computed based on steps 1 and 2.
This ensures that the master's increment is consistent with that of the slave,
even if one or both of them was being concurrently overwritten by another thread.
More details on the implementation can be found in \cite{nipsws}.

Given this update scheme, we experiment with three levels of synchronization:
(a) \textbf{Double Locking:} Locks the master and the slave through the entire update. Because the objective function is decomposable, a more conservative locking (e.g. locking all nodes) is not needed. (b) \textbf{Single Locking:} Locks the master during steps 1 and 3  (the master is unlocked during step 2 and locks the slave during step 2). (c) \textbf{Lock-free:} No locks are used. Master and slave variables are updated through atomic increments similar to Hogwild! method.

Following \cite{Recht11}, we use spinlocks instead of mutex locks to implement locking. Spinlocks are preferred over mutex locks when the resource is locked for a short period of time, which is the case in our algorithm. For each locking mechanism, we vary the number of threads from 1 to 15. 
We stop when $f_0 - f_t > 0.99 (f_0 - f^*)$, where $f^*$ is computed beforehand up to three significant digits. Similar to \cite{Recht11}, we add artificial delay to steps 1 and 2 in the update scheme to model complicated gradient calculations and/or network latency in a distributed setting. 

Figure \ref{fig:speedup} shows the speedup for Tree + Ring and Star + Ring layouts.
The figure clearly shows that a fully synchronous method suffers from contention in the Star + Ring topology whereas asynchronous method does not suffer from this problem and hence, achieves higher speedups. Although the Tree + Ring layouts achieves higher speedup than Star + Ring, the latter topology results in much less running time ($\sim$ 67 seconds vs 91 seconds using 15 threads).

\begin{figure}
\begin{center}
\begin{tabular}{cc}
\includegraphics[width=0.8\linewidth]{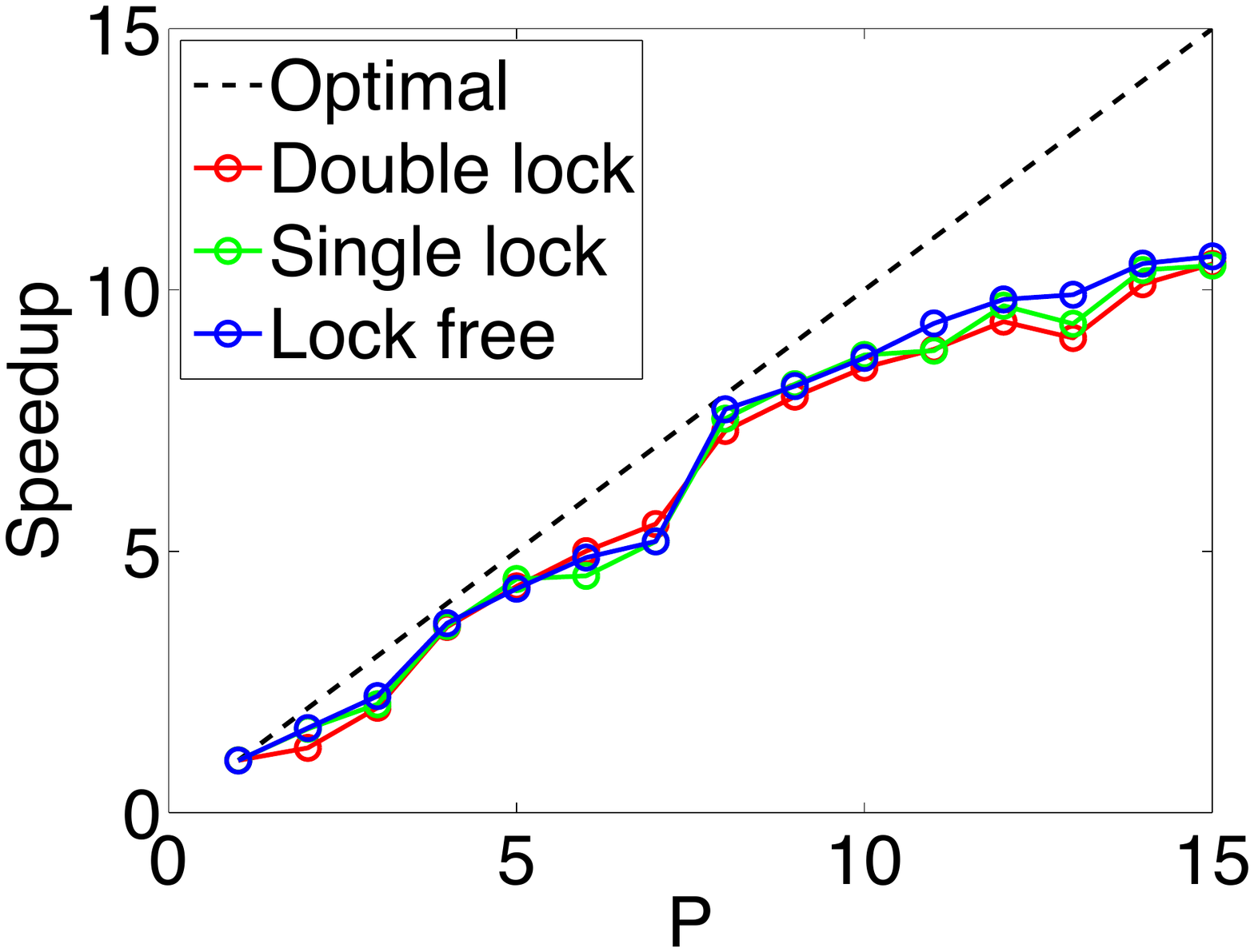}\\
\includegraphics[width=0.8\linewidth]{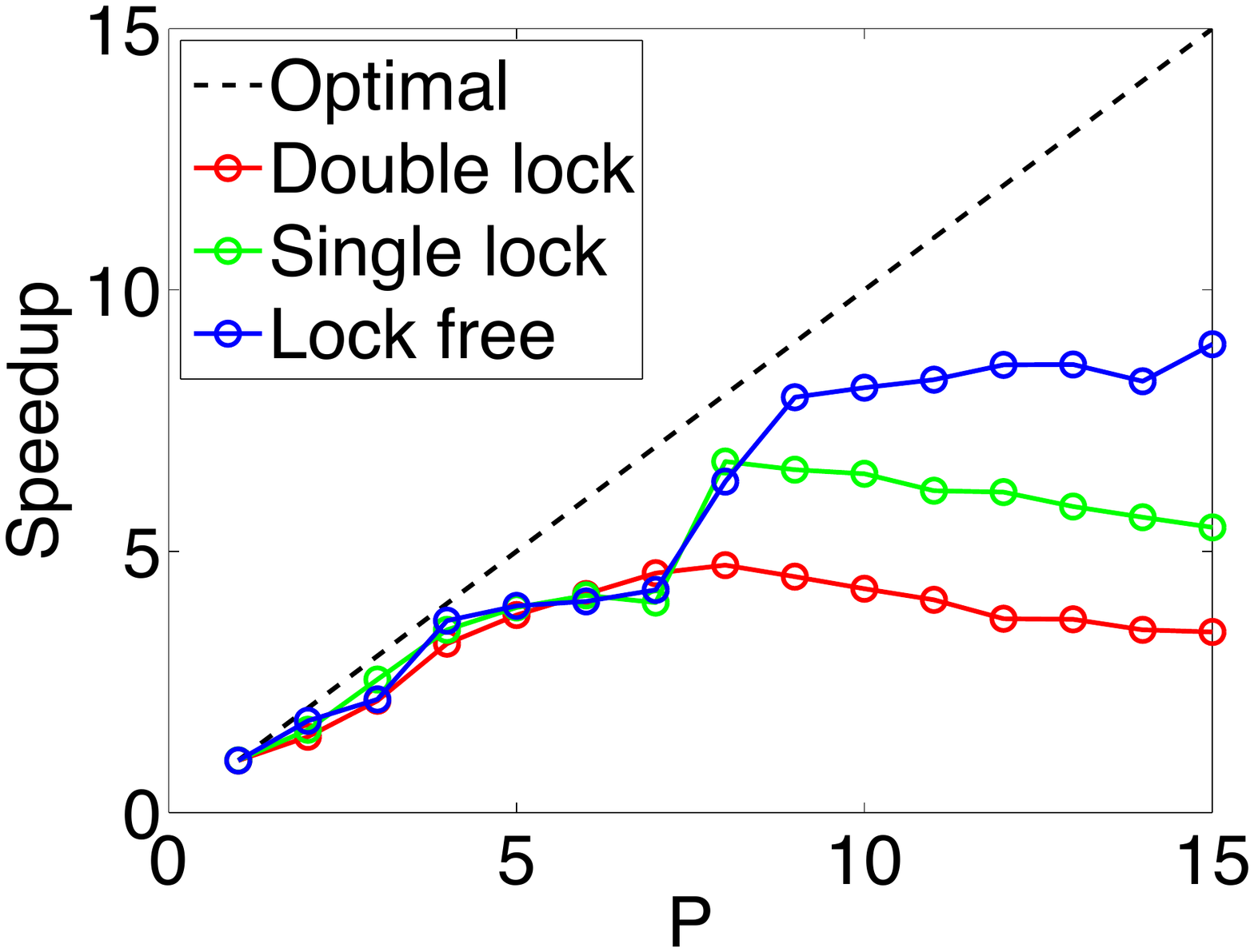}
\end{tabular}
\caption{Speedup for Tree + Ring (top) and Star + Ring (bottom) topologies and different levels of synchronization. Note for Star + Ring topology, speedup of asynchronous algorithm is significantly higher than that of synchronous version.}
\label{fig:speedup}
\end{center}
\end{figure}

\subsection{Practical Case Study: Parallel Training of Linear SVM}
In this section, we explore the effect of parallelism on randomized CD for training a linear SVM
based on the dual formulation stated in \eqref{eq:svm}.
Necoara et. al. \citep{Necoara14} have shown that, in terms of CPU time, a sequential randomized CD outperforms coordinate descent
using Gauss-Southwell selection rule.
It was also observed that randomized CD outperforms LIBSVM \cite{libsvm}
for large datasets while maintaining reasonable performance for small datasets.

In this experiment we use a clique layout. For SVM training in a multicore setting, using a clique layout 
does not introduce additional cost compared to a more sparse layout. 
To maintain the box constraint, we use the double-locking scheme described in Section \ref{sec:sync}
for updating a pair of dual variables.

One advantage of coordinate descent algorithms is that they do not require the storage of the Gram matrix;
instead they can compute its elements on the fly. That comes, however, at the expense of CPU time.
Similar to \cite{Necoara14}, to speed up gradient computations without increasing memory requirements,
we maintain the primal weight vector
of the linear SVM and use it to compute gradients. Basically, if we increment $\alpha_i$ by $\delta_i$
and $\alpha_j$ by $\delta_j$, then we increment the weight vector by $\delta_i y_i x_i + \delta_j y_j x_j$.
This increment is accomplished using atomic additions. 
However, this implies that all threads will be concurrently updating the primal weight vector.
Similar to \cite{Recht11}, we require these updates
to be sparse with small overlap between non-zero coordinates
in order to ensure convergence. 
In other words, we require training examples to have sparse features with small overlap between non-zero features.

We report speedups on two datasets used in \citep{Necoara14}.\footnote{Datasets can be downloaded from
\url{http://www.csie.ntu.edu.tw/~cjlin/libsvmtools/datasets}.
}
Table \ref{tbl:datasets} provides a description of both the datasets. 
For each dataset, we train the SVM model until $f_0 - f_t > 0.9999(f_0 - f^*)$, where $f^*$
is the objective reported in \citep{Necoara14}.
In Figure \ref{fig:speedup_svm}, we report speedup for both the datasets. The figure shows that parallelism indeed increases the performance of randomized CD training of linear SVM.

\begin{table}
\centering
\begin{tabular}{|x{1cm}|x{1.5cm}|x{1.2cm}|x{2cm}|}
\hline
Dataset & \# of instances & \# of features & Avg \# of non-zero features \\
\hline
{\tt a7a} & 16100 & 122 & 14\\
\hline
{\tt w8a} & 49749 & 300 & 12\\
\hline
\end{tabular}
\caption{Datasets used for linear SVM Speedup experiment}
\label{tbl:datasets}
\end{table}

\begin{figure}
\begin{center}
\begin{tabular}{cc}
\includegraphics[width=0.8\linewidth]{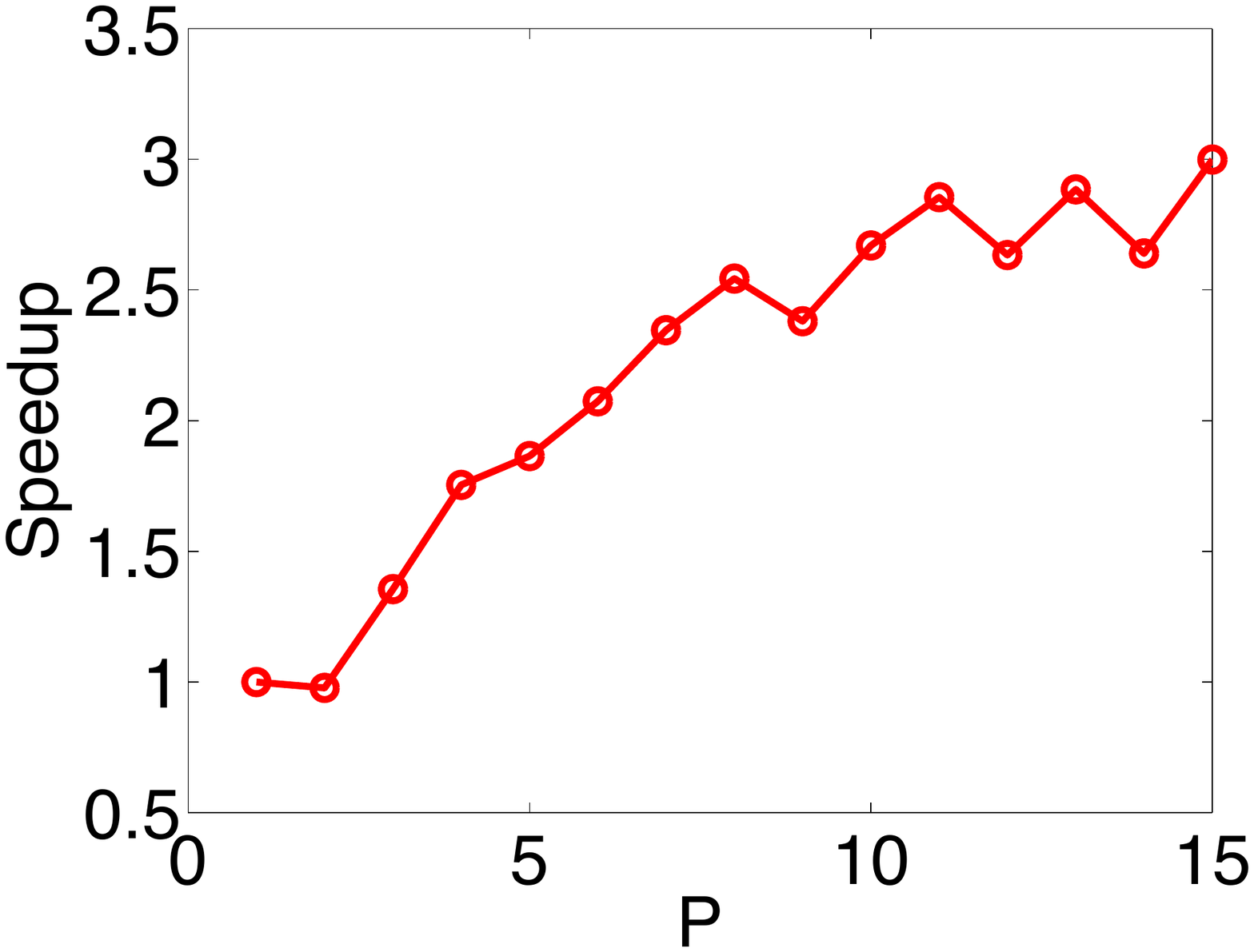}\\
\includegraphics[width=0.8\linewidth]{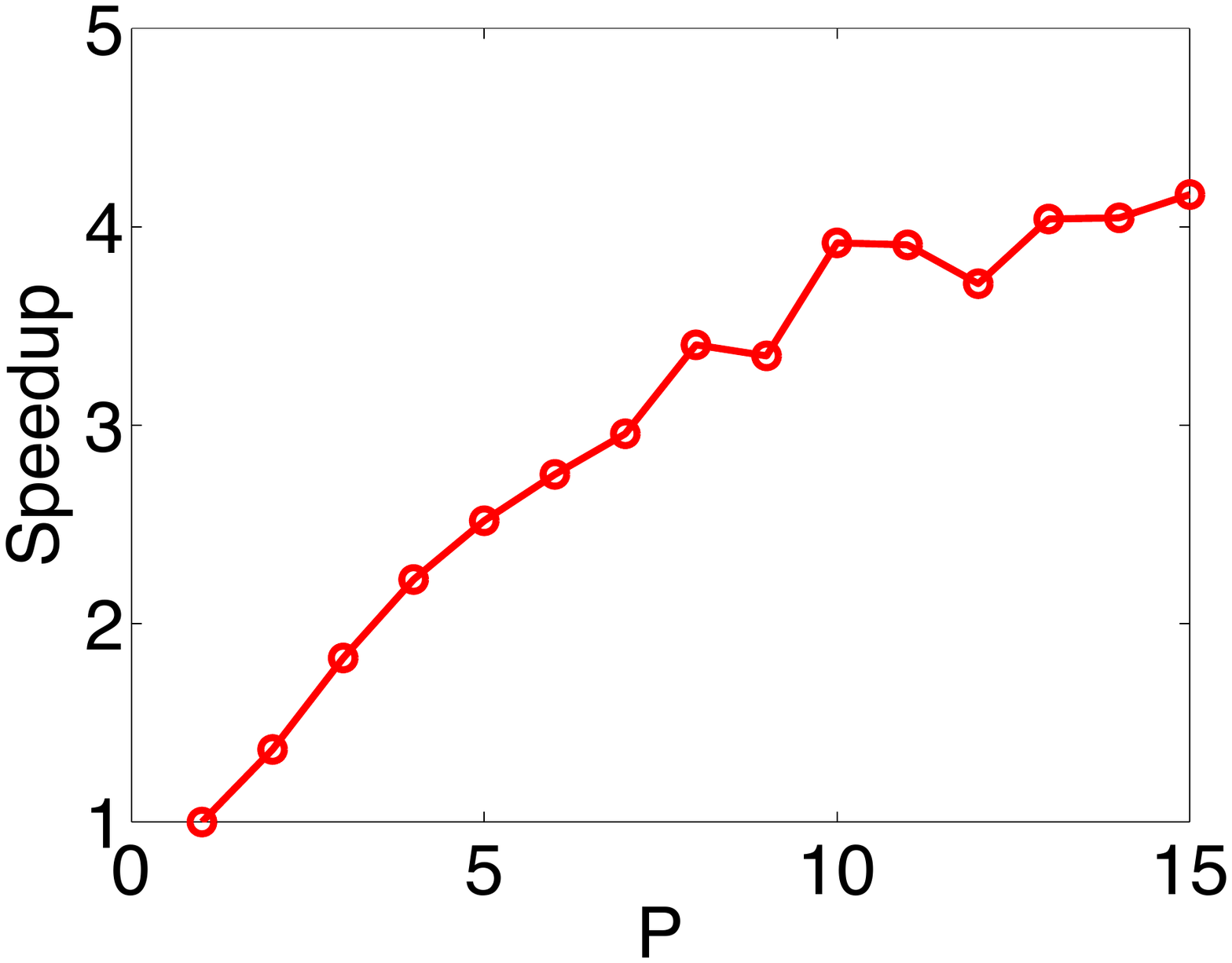}
\end{tabular}
\caption{Speedup for linear SVM training on {\tt a7a} (top) and {\tt w8a} datasets.}
\label{fig:speedup_svm}
\end{center}
\end{figure}

%%% Local Variables: 
%%% mode: latex
%%% TeX-master: "coordNips"
%%% TeX-PDF-mode: t
%%% End: 

\section{DISCUSSION AND FUTURE WORK}
\vspace{-2 mm}
We presented randomized coordinate descent methods for solving convex optimization problems with linear constraints that couple the variables. 
%Previous coordinate descent methods either do not apply to this coupled setup, or if they do, they lack theoretical complexity analysis.
Moreover, we also presented composite objective, stochastic, and asynchronous versions of our basic method 
and provided their convergence analysis.
We demonstrated the empirical performance of the algorithms. The experimental results of asynchronous algorithm look very promising. 

There are interesting open problems for our problem in consideration: First, we would like to obtain high-probability results not just in expectation; another interesting direction is to extend the asynchronous algorithm to the non-smooth setting. Finally, while we obtain $O(1/k)$ for general convex functions, obtaining an accelerated $O(1/k^2)$ rate is a natural question.

\subsubsection*{Acknowledgments}
SS is partly supported by NSF grant: IIS-1409802. 
We thanks the anonymous reviewers for the helpful comments.

{
\bibliographystyle{abbrvnat}
\setlength{\bibsep}{1pt}
\bibliography{bibfile}
}

\clearpage
\appendix
\onecolumn
\section*{Appendix}
\section{Proof of Theorem \ref{thm:conv}}
\begin{proof}
Taking the expectation over the choice of edges $(i_k,j_k)$ gives the following inequality
\begin{align}
\label{eq:conv-eq1}
\nonumber \mathbb{E}_{i_kj_k}[f(x^{k+1})|\eta_k] &\leq \mathbb{E}_{i_kj_k}\left[f(x^k)
-\frac{1}{4L} \|\nabla_{y_{i_k}} f(x^k) - \nabla_{y_{j_k}} f(x^k)\|^2
 -\frac{1}{2L} \|\nabla_{z_{i_k}} f(x^k)\|^2 - \frac{1}{2L}  \|\nabla_{z_{j_k}} f(x^k)\|^2\right] \\
\nonumber &\leq f(x^k)
 -\frac{1}{2}\nabla_y f(x^k)^\top ({\cal L} \otimes I_{n_y}) \nabla_y f(x^k)
  -\frac{1}{2} \nabla_z f(x^k)^\top ({\cal D} \otimes I_{n_z}) \nabla_z f(x^k) \\
&\leq f(x^k) -\frac{1}{2} \nabla f(x^k)^\top \mathcal{K} \nabla f(x^k),
\end{align}
where $\otimes$ denotes the Kronecker product. This shows that the method is a descent method. Now we are ready to prove the main convergence theorem. We have the following:
\begin{align*}
f(x^{k+1}) - f^* & \leq \langle \nabla f(x^k), x^k - x^* \rangle
\leq \|x^k - x^*\|_{\cal K}^* \| \nabla f(x^k) \|_{\cal K} \\
& \leq R(x^0)\| \nabla f(x^k) \|_{\cal K} \quad \forall k \geq 0.
\end{align*}
Combining this with inequality \eqref{eq:conv-eq1}, we obtain
\begin{align*}
\mathbb{E}[f(x^{k+1}|\eta_k] \leq  f(x^k) - \frac{(f(x^k)-f^*)^2}{2R^2(x^0)}.
\end{align*}
Taking the expectation of both sides an denoting $\Delta_k = \mathbb{E}[f(x^k)] - f^*$ gives
\begin{align*}
\Delta_{k+1} \leq \Delta_k - \frac{\Delta_k^2}{2R^2(x^0)}.
\end{align*}
Dividing both sides by $\Delta_k \Delta_{k+1}$ and using the fact that $\Delta_{k+1} \leq \Delta_k$ we obtain
\begin{align*}
\frac{1}{\Delta_k} \leq \frac{1}{\Delta_{k+1}} - \frac{1}{2R^2(x^0)}.
\end{align*}
Adding these inequalities for $k$ steps $0 \leq \frac{1}{\Delta_0} \leq \frac{1}{\Delta_k} - \frac{k}{2R^2(x^0)}$ from which we obtain the statement of the theorem where $C = 2R^2(x^0)$.
\end{proof}

\section{Proof of Theorem \ref{thm:stochastic-conv}}

\begin{proof}
In this case, the expectation should be over the selection of the pair $(i_k,j_k)$ and random index $l_k \in [N]$. In this proof, the definition of $\eta_k$ includes $l_k$ i.e., $\eta_k = \{(i_0,j_0,l_0), \dots, (i_{k-1},j_{k-1},l_{k-1})\}$. We define the following:
\begin{align*}
d_{i_k}^k &=  \left[\frac{\alpha_k}{2L} \left[\nabla_{y_{j_k}} f_{l_k}(x^{k}) - \nabla_{y_{i_k}} f_{l_k}(x^{k}) \right]^{\top} ,\quad -\frac{\alpha_k}{L} \left[\nabla_{z_{i_k}} f_{l_k}(x^{k}) \right]^{\top} \right]^{\top},\\
d_{j_k}^k &=  \left[\frac{\alpha_k}{2L} \left[\nabla_{y_{j_k}} f_{l_k}(x^{k}) - \nabla_{y_{i_k}} f_{l_k}(x^{k}) \right]^{\top} ,\quad \frac{\alpha_k}{L} \left[\nabla_{z_{j_k}} f_{l_k}(x^{k}) \right]^{\top} \right]^{\top},\\
d_{i_kj_k}^{l_k} &= U_{i_k} d_{i_k}^k - U_{j_k} d_{j_k}^k.
\end{align*}
For the expectation of objective value at $x^{k+1}$, we have
\begin{align*}
&\mathbb{E}[f(x^{k+1})|\eta_k] \leq \mathbb{E}_{i_kj_k}\mathbb{E}_{l_k}\left[f(x^k)
+ \left\langle \nabla f(x^k), d_{i_kj_k}^{l_k} \right\rangle + \frac{L}{2} \|d_{i_kj_k}^{l_k}\|^2\right] \\
 &\leq \mathbb{E}_{i_kj_k} \left[f(x^k)
 + \left\langle \nabla f(x^k), \mathbb{E}_{l_k}[d_{i_kj_k}^{l_k}] \right\rangle + \frac{L}{2} \mathbb{E}_{l_k}[\|d_{i_kj_k}^{l_k}\|^2]\right] \\
& \leq \mathbb{E}_{i_kj_k} \Big[f(x^k)
 + \frac{\alpha_{k}}{2L} \left\langle \nabla_{y_{i_k}} f(x^k), \mathbb{E}_{l_k}[\nabla_{y_{j_k}}f_{l_k}(x^k) - \nabla_{y_{i_k}} f_{l_k}(x^k)] \right\rangle \\
&  \quad \quad +  \frac{\alpha_{k}}{2L} \left\langle \nabla_{y_{j_k}} f(x^k), \mathbb{E}_{l_k}[\nabla_{y_{i_k}}f_{l_k}(x^k) - \nabla_{y_{j_k}} f_{l_k}(x^k)] \right\rangle \\ 
 & \quad \quad -  \frac{\alpha_{k}}{L} \left\langle \nabla_{z_{i_k}} f(x^k), \mathbb{E}_{l_k}[\nabla_{z_{i_k}} f_{l_k}(x^k)] \right\rangle -  \frac{\alpha_{k}}{L} \left\langle \nabla_{z_{j_k}} f(x^k), \mathbb{E}_{l_k}[\nabla_{z_{j_k}} f_{l_k}(x^k)] \right\rangle + \frac{L}{2} \mathbb{E}_{l_k}[\|d_{i_kj_k}^{l_k}\|^2]\Big].
 \end{align*}
 Taking expectation over $l_k$, we get the following relationship:
 \begin{align*}
 \mathbb{E}[f(x^{k+1})|\eta_k] &\leq \mathbb{E}_{i_kj_k} \Big[f(x^k)
  +  \frac{\alpha_{k}}{2L} \left\langle \nabla_{y_{i_k}} f(x^k), \nabla_{y_{j_k}}f(x^k) - \nabla_{y_{i_k}} f(x^k) \right\rangle  \\
  & \quad \quad +  \frac{\alpha_{k}}{2L} \left\langle \nabla_{y_{j_k}} f(x^k), \nabla_{y_{i_k}}f(x^k) - \nabla_{y_{j_k}} f(x^k) \right\rangle \\ 
  & \quad \quad -  \frac{\alpha_{k}}{L} \left\langle \nabla_{z_{i_k}} f(x^k), \nabla_{z_{i_k}} f(x^k) \right\rangle -  \frac{\alpha_{k}}{L} \left\langle \nabla_{z_{j_k}} f(x^k), \nabla_{z_{j_k}} f(x^k) \right\rangle + \frac{L}{2} \mathbb{E}_{l_k}[\|d_{i_kj_k}^{l_k}\|^2]\Big].
\end{align*}

We first note that $\mathbb{E}_{l_k}[\|d_{i_kj_k}^{l_k}\|^2] \leq 8M^2\alpha_{k}^2/L^2$ since $\|\nabla f_l\| \leq M$. Substituting this in the above inequality and simplifying we get,
\begin{align}
\label{eq:stoch-conv-eq1}
\nonumber \mathbb{E}[f(x_{k+1})|\eta_k] &\leq f(x^k)
 - \alpha_{k} \nabla_y f(x^k)^\top ({\cal L} \otimes I_n) \nabla_y f(x^k)
  - \alpha_{k} \nabla_z f(x^k)^\top ({\cal D} \otimes I_n) \nabla_z f(x^k) + \frac{4M^2\alpha_{k}^2}{L} \\
& \leq f(x^k) - \alpha_{k} \nabla f(x^k)^\top \mathcal{K}  \nabla f(x^k) + \frac{4M^2\alpha_{k}^2}{L}.
\end{align}
Similar to Theorem~\ref{thm:conv}, we obtain a lower bound on $\nabla f(x^k)^\top \mathcal{K} \nabla f(x^k)$ in the following manner.
\begin{align*}
f(x^k) - f^* & \leq \langle \nabla f(x^k), x^k - x^* \rangle
\leq \|x^k - x^*\|_{\cal K}^*.\| \nabla f(x^k) \|_{\cal K} \\
& \leq R(x^0) \| \nabla f(x^k) \|_{\cal K}.
\end{align*}
Combining this with inequality Equation~\ref{eq:stoch-conv-eq1}, we obtain
\begin{align*}
\mathbb{E}[f(x_{k+1})|\eta_k] \leq f(x^k) - \alpha_{k} \frac{(f(x^k)-f^*)^2}{R^2(x^0)} + \frac{4M^2\alpha_{k}^2}{L}.
\end{align*}
Taking the expectation of both sides an denoting $\Delta_k = \mathbb{E}[f(x^k)] - f^*$ gives
\begin{align*}
\Delta_{k+1} \leq \Delta_k - \alpha_{k} \frac{\Delta_k^2}{R^2(x^0)} + \frac{4M^2\alpha_{k}^2}{L}.
\end{align*}
Adding these inequalities from $i=0$ to $i=k$ and use telescopy we get,
\begin{align*}
\Delta_{k+1} + \sum_{i=0}^{k} \alpha_i \frac{\Delta_{k}^2}{R^2(x^0)} \leq \Delta_0 + \frac{4M^2}{L} \sum_{i=0}^{k} \alpha_i^2.
\end{align*}
Using the definition of $\bar{x}_{k+1} = \arg\min_{0 \leq i \leq k+1} f(x_i)$, we get
\begin{align*}
\sum_{i=0}^{k} \alpha_i \frac{(\mathbb{E}[f(\bar{x}_{k+1}) - f^*])^2}{R^2(x^0)} \leq \Delta_{k+1} + \sum_{i=0}^{k} \alpha_i \frac{\Delta_{k}^2}{R^2(x^0)} \leq \Delta_0 + \frac{4M^2}{L} \sum_{i=0}^{k} \alpha_{i}^2.
\end{align*}
Therefore, from the above inequality we have,
\begin{align*}
\mathbb{E}[f(\bar{x}_{k+1}) - f^*] \leq R(x^0)\sqrt{\frac{(\Delta_0 + 4M^2\sum_{i=0}^{k} \alpha_i^2/L)}{\sum_{i=0}^{k} \alpha_i}}.
\end{align*}
Note that $\mathbb{E}[f(\bar{x}_{k+1}) - f^*] \rightarrow 0$ if we choose step sizes satisfying the condition that $\sum_{i=0}^\infty \alpha_i = \infty$ and $\sum_{i=0}^{\infty} \alpha_i^2 < \infty$. Substituting $\alpha_i = \sqrt{\Delta_0 L}/(2M\sqrt{i+1})$, we get the required result using the reasoning from \cite{Nemirovski09} (we refer the reader to Section 2.2 of \cite{Nemirovski09} for more details).
\end{proof}

\section{Proof of Theorem \ref{thm:asyn-conv}}

\begin{proof} For ease of exposition, we analyze the case where the unconstrained variables $z$ are absent. The analysis of case with $z$ variables can be carried out in a similar manner. Consider the update on edge $(i_k,j_k)$. Recall that $D(k)$ denotes the index of the iterate used in the $k^{\text{th}}$ iteration for calculating the gradients. Let $d^k = \frac{\alpha_k}{2L} \left(\nabla_{y_{j_k}} f(x^{D(k)}) - \nabla_{y_{i_k}} f(x^{D(k)}) \right)$ and $d_{i_kj_k}^k = x^{k+1} - x^{k} = U_{i_k}d^k - U_{j_k}d^k$. Note that $\|d_{i_kj_k}^k\|^2 = 2\|d^k\|^2$. Since $f$ is Lipschitz continuous gradient, we have
\begin{align*}
f(x^{k+1}) &\leq f(x^k) + \left\langle \nabla_{y_{i_k}y_{j_k}} f(x^k), d_{i_kj_k}^k \right\rangle + \frac{L}{2}\|d_{i_kj_k}^k\|^2 \\
&\leq f(x^k) + \left\langle \nabla_{y_{i_k}y_{j_k}} f(x^{D(k)}) + \nabla_{y_{i_k}y_{j_k}} f(x^{k}) - \nabla_{y_{i_k}y_{j_k}} f(x^{D(k)}), d_{i_kj_k}^k \right\rangle + \frac{L}{2}\|d_{i_kj_k}^k\|^2 \\
&\leq f(x^k) - \frac{L}{\alpha_k} \|d_{i_kj_k}^k\|^2 + \left\langle \nabla_{y_{i_k}y_{j_k}} f(x^{k}) - \nabla_{y_{i_k}y_{j_k}} f(x^{D(k)}), d_{i_kj_k}^k \right\rangle  + \frac{L}{2}\|d_{i_kj_k}^k\|^2 \\
&\leq f(x^k) - L\left(\frac{1}{\alpha_k} - \frac{1}{2}\right) \|d_{i_kj_k}^k\|^2 + \|\nabla_{y_{i_k}y_{j_k}} f(x^{k}) - \nabla_{y_{i_k}y_{j_k}} f(x^{D(k)})\| \|d_{i_kj_k}^k\| \\
&\leq f(x^k) - L\left(\frac{1}{\alpha_k} - \frac{1}{2}\right) \|d_{i_kj_k}^k\|^2 + L \|x^k - x^{D(k)}\| \|d_{i_kj_k}^k\|.
\end{align*}
The third and fourth steps in the above derivation follow from  definition of $d_{ij}^k$ and Cauchy-Schwarz inequality respectively. The last step follows from the fact the gradients are Lipschitz continuous. Using the assumption that staleness in the variables is bounded by $\tau$, i.e., $k - D(k) \leq \tau$ and definition of $d_{ij}^k$, we have
\begin{align*}
f(x^{k+1}) &\leq f(x^k) - L\left(\frac{1}{\alpha_k} - \frac{1}{2}\right) \|d_{i_kj_k}^k\|^2 + L\left(\sum_{t=1}^{\tau} \|d_{i_{k-t}j_{k-t}}^{k-t}\| \|d_{i_kj_k}^k\|\right) \\
&\leq f(x^k) - L\left(\frac{1}{\alpha_k} - \frac{1}{2}\right) \|d_{i_kj_k}^k\|^2 + \frac{L}{2} \left(\sum_{t=1}^{\tau} \left[ \|d_{i_{k-t}j_{k-t}}^{k-t}\|^2 + \|d_{i_kj_k}^k\|^2 \right]\right) \\
&\leq f(x^k) - L\left(\frac{1}{\alpha_k} - \frac{1+\tau}{2}\right) \|d_{i_kj_k}^k\|^2 + \frac{L}{2} \sum_{t=1}^{\tau} \|d_{i_{k-t}j_{k-t}}^{k-t}\|^2. \\
\end{align*}
The first step follows from triangle inequality. The second inequality follows from fact that $ab \leq (a^2 + b^2)/2$. Using expectation over the edges, we have
\begin{align} 
\label{eq:asyn-inter-bound}
\mathbb{E}[f(&x^{k+1})] \leq \mathbb{E}[f(x^k)] - L \left(\frac{1}{\alpha_k} - \frac{1+\tau}{2}\right) \mathbb{E}[\|d_{i_kj_k}^k\|^2] + \frac{L}{2} \mathbb{E}\left[\sum_{t=1}^{\tau} \|d_{i_{k-t}j_{k-t}}^{k-t}\|^2\right].
\end{align}
We now prove that, for all $k \geq 0$
\begin{align}
\label{eq:bounded-step}
\mathbb{E}\left[ \|d_{i_{k-1}j_{k-1}}^{k-1}\|^2\right] \leq \rho \mathbb{E}\left[ \|d_{i_{k}j_{k}}^k\|^2\right],
\end{align}
where we define $\mathbb{E}\left[ \|d_{i_{k-1}j_{k-1}}^{k-1}\|^2\right] = 0$ for $k = 0$.  
Let $w^t$ denote the vector of size $|E|$
such that $w_{ij}^t = \sqrt{p_{ij}}\|d_{ij}^t\|$
(with slight abuse of notation, we use $w^t_{ij}$ to denote the entry corresponding to edge $(i,j)$).
Note that $\mathbb{E}\left[ \|d_{i_{t}j_{t}}^t\|^2\right] = \mathbb{E}[\|w^t\|^2]$. We prove Equation~(\ref{eq:bounded-step}) by induction.

Let $u^k$ be a vector of size $|E|$ such that $u_{ij}^k = \sqrt{p_{ij}} \|d_{ij}^k - d_{ij}^{k-1}\|$. Consider the following:
\begin{align}
\label{eq:ind-hyp}
\nonumber \mathbb{E}[\|w^{k-1}\|]^2 - \mathbb{E}[\|w^{k}\|^2] &= \mathbb{E}[2\|w^{k-1}\|]^2 - \mathbb{E}[\|w^{k}\|^2 + \|w^{k-1}\|^2] \nonumber \\
&\leq 2\mathbb{E}[\|w^{k-1}\|^2] - 2\mathbb{E}[\langle w^{k-1}, w^{k} \rangle] \nonumber \\
\nonumber &\leq 2\mathbb{E}[\|w^{k-1}\| \|w^{k-1} - w^k\|] \\
\nonumber &\leq 2\mathbb{E}[\|w^{k-1}\|\|u^{k}\|] \leq 2\mathbb{E}[\|w^{k-1}\| \sqrt{2} \alpha_k \|x^{D(k)} - x^{D(k-1)}\|] \\
&\leq \sqrt{2} \alpha_k \sum_{t=\min(D(k-1),D(k))}^{\max(D(k-1),D(k))} \left(\mathbb{E}[\|w^{k-1}\|^2] + \mathbb{E}[\|d_{i_{t}j_t}^t\|^2]\right).
\end{align}

The fourth step follows from the bound below on $|u^k_{ij}|$
\begin{align*}
|u^k_{ij}| & = \sqrt{p_{ij}} \|d_{ij}^k - d_{ij}^{k-1}\| \\
& \leq \sqrt{p_{ij}} \|(U_i - U_j) \frac{\alpha_k}{2L} (\nabla_{y_i} f(x^{D(k)}) - \nabla_{y_j} f(x^{D(k)}) + \nabla_{y_j} f(x^{D(k-1)}) - \nabla_{y_i} f(x^{D(k-1)})) \| \\
& \leq \sqrt{2 p_{ij}} \alpha_k \|x^{D(k)} - x^{D(k-1)}\|.
\end{align*}

The fifth step follows from triangle inequality. We now prove \eqref{eq:bounded-step}: 
the induction hypothesis is trivially true for $k=0$. Assume it is true for some $k-1 \geq 0$. 
Now using Equation~(\ref{eq:ind-hyp}), we have
\begin{align*}
\mathbb{E}[\|w^{k-1}\|]^2 - \mathbb{E}[\|w^{k}\|^2] \leq \sqrt{2} \alpha_k (\tau+2) \mathbb{E}[\|w^{k-1}\|^2] + \sqrt{2} \alpha_k (\tau+2) \rho^{\tau+1}\mathbb{E}[\|w^{k}\|^2]
\end{align*}
for our choice of $\alpha_k$. The last step follows from the fact that $\mathbb{E}[\|d_{i_tj_t}^t\|^2] = \mathbb{E}[\|w^{t}\|^2]$ and mathematical induction. From the above, we get 
\begin{align*}
\mathbb{E}[\|w^{k-1}\|^2] \leq \frac{1 + \sqrt{2}{\alpha_k}(\tau+2)\rho^{(\tau+1)}}{1 - \sqrt{2}{\alpha_k}(\tau+2)} \mathbb{E}[\|w^{k}\|^2] \leq \rho \mathbb{E}[\|w^{k}\|^2].
\end{align*}
Thus, the statement holds for $k$. Therefore, the statement holds for all $k \in \mathbb{N}$ by mathematical induction. Substituting the above in Equation~(\ref{eq:asyn-inter-bound}), we get
\begin{align*}
\mathbb{E}[f(&x^{k+1})] \leq \mathbb{E}[f(x^k)] - L \left(\frac{1}{\alpha_k} - \frac{1+\tau + \tau\rho^{\tau}}{2}\right) \mathbb{E}[\|d_{i_kj_k}^k\|^2].
\end{align*}
This proves that the method is a descent method in expectation. Using the definition of $d_{ij}^k$, we have
\begin{align*}
\mathbb{E}[f(x^{k+1})] &\leq \mathbb{E}[f(x^k)] - \frac{\alpha_k^2}{4L}\left(\frac{1}{\alpha_k} - \frac{1+\tau+\tau \rho^{\tau}}{2}\right)\mathbb{E}[\|\nabla_{y_{i_k}} f(x^{D(k)}) - \nabla_{y_{j_k}} f(x^{D(k)})\|^2] \\
&\leq \mathbb{E}[f(x^k)] - \frac{\alpha_k^2}{4L}\left(\frac{1}{\alpha_k} - \frac{1+\tau+\tau \rho^{\tau}}{2}\right)\mathbb{E}[\|\nabla f(x^{D(k)}) - \nabla f(x^{D(k)})\|_{\cal K}^2] \\
&\leq \mathbb{E}[f(x^k)] - \frac{\alpha_k^2}{2R^2(x^0)}\left(\frac{1}{\alpha_k} - \frac{1+\tau+\tau \rho^{\tau}}{2}\right) \mathbb{E}[(f(x^{D(k)})-f^*)^2] \\
&\leq \mathbb{E}[f(x^k)] - \frac{\alpha_k^2}{2R^2(x^0)}\left(\frac{1}{\alpha_k} - \frac{1+\tau+\tau \rho^{\tau}}{2}\right) \mathbb{E}[(f(x^{k})-f^*)^2].
\end{align*}
The second and third steps are similar to the proof of Theorem \ref{thm:conv}.
The last step follows from the fact that the method is a descent method in expectation. Following similar analysis as Theorem \ref{thm:conv}, we get the required result.
\end{proof}

\section{Proof of Theorem \ref{thm:non-smooth-conv}}

\begin{proof}

Let $Ax = \sum_i x_i$.
Let $\tilde{x}_{k+1}$ be solution to the following optimization problem:
$$
\tilde{x}^{k+1} = \arg\min_{\{x| Ax = 0\}} \langle \nabla f(x^k), x - x^{k} \rangle + \frac{L}{2} \|x - x^{k} \|^2 + h(x).
$$
To prove our result, we first prove few intermediate results. We say vectors $d \in \mathbb{R}^n$ and $d' \in \mathbb{R}^n$ are conformal if $d_i d_i' \geq 0$ for all $i \in [b]$.
We use $d_{i_kj_k} = x^{k+1} - x^{k}$ and $d = \tilde{x}^{k+1} - x^{k}$. Our first claim is that for any $d$, we can always find conformal vectors whose sum is $d$ (see \cite{Necoara14}). More formally, we have the following result.
\begin{lemma}
For any $d \in \mathbb{R}^n$ with $Ad = 0$, we have a multi-set $S = \{d_{ij}'\}_{i \neq j}$ such that $d$ and $d_{ij}'$ are conformal for all $i \neq j$ and $i,j \in [b]$ i.e., $\sum_{i \neq j} d_{ij}' = d$, $Ad_{ij}' = 0$ and $d_{ij}'$ can be non-zero only in coordinates corresponding to $x_i$ and $x_j$.
\end{lemma}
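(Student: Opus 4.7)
The plan is to construct the decomposition explicitly via a coordinatewise greedy pairing procedure, essentially a combinatorial ``flow decomposition'' argument. First I would reduce to the scalar case. In the setting of Theorem~\ref{thm:non-smooth-conv} we are in the sum-constraint case of the reformulated problem \eqref{eq:reduced-optimization-problem}, so the constraint $Ad=0$ means $\sum_{i=1}^b d_i = 0$, which holds componentwise: writing $d_{i,c}$ for the $c$-th scalar of block $i$, we have $\sum_{i=1}^b d_{i,c}=0$ for every scalar coordinate $c \in [n/b]$. Both the support condition (nonzero only on blocks $i,j$) and the conformality condition $d_{i,c}\,(d_{ij}')_{i,c}\ge 0$ decouple across the coordinate index $c$, so it suffices to solve the scalar version separately for each $c$ and then aggregate.

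Fix a scalar coordinate $c$ and write $v_i := d_{i,c} \in \mathbb{R}$, so $\sum_i v_i = 0$. The construction is greedy: let $P=\{i: v_i>0\}$ and $N=\{i: v_i<0\}$. Pick any $i^+\in P$ and $i^-\in N$, set $\delta:=\min(v_{i^+},-v_{i^-})$, and define the pair vector $e^{(i^+i^-)}\in\mathbb{R}^b$ with $e^{(i^+i^-)}_{i^+}=\delta$, $e^{(i^+i^-)}_{i^-}=-\delta$, and zeros elsewhere. Update $v_{i^+}\leftarrow v_{i^+}-\delta$ and $v_{i^-}\leftarrow v_{i^-}+\delta$, remove whichever index just became zero from $P$ or $N$, and iterate. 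Each step zeroes at least one index, so the loop terminates after at most $|P|+|N|-1\le b-1$ iterations, and at termination $v=0$ because the sum is preserved at $0$ and $P,N$ are both empty.

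I would then verify the three required properties of the scalar decomposition. By construction each pair vector has support on exactly two blocks $\{i^+,i^-\}$, sums to zero on those two entries (so it lies in the kernel of the scalar sum constraint), and has the same sign as the original $v$ on each of those coordinates (positive at $i^+$, negative at $i^-$), hence is conformal with $v$. The sum of all pair vectors produced equals $v$ because the updates exactly telescope the greedy reductions. Finally, aggregate across $c$: for each ordered pair $(i,j)$, define $d_{ij}'$ to be the block vector whose $c$-th scalar in block $i$ (resp.\ block $j$) equals the contribution of coordinate $c$'s greedy procedure to pair $(i,j)$, and zero on all other blocks. Linearity gives $\sum_{i\neq j} d_{ij}' = d$, each $d_{ij}'$ is supported on blocks $i,j$ only, $Ad_{ij}'=0$ since the sum constraint holds coordinatewise, and conformality with $d$ holds because it held for every coordinate separately.

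The only conceptual obstacle is making sure the coordinatewise reduction is legitimate, which hinges on the fact that the constraint matrix in \eqref{eq:a} is the blockwise identity sum (so different scalar coordinates are decoupled) and that the definition of conformality is itself a scalarwise sign condition. With those two observations the problem collapses to a trivial greedy scalar decomposition. No deep tool beyond $\sum_i v_i = 0$ and pigeonhole on positive/negative entries is required, which is why the lemma can be cited from \citet{Necoara14} with only a sketch.
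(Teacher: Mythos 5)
Your proposal is correct and follows essentially the same route as the paper's own proof: both are greedy iterative constructions that pair a positive and a negative entry of the same scalar coordinate across two blocks, transfer the minimum of their magnitudes (so signs are preserved, giving conformality, and at least one entry is zeroed per step, giving termination), and accumulate the transfers into the pair vectors $d_{ij}'$. The only cosmetic difference is that you organize the procedure coordinate-by-coordinate while the paper repeatedly picks the globally smallest nonzero entry and pairs it within its own scalar coordinate; the underlying argument is identical.
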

\begin{proof}
We prove by an iterative construction, i.e., for every vector $d$ such that $Ad = 0$, we construct a set $S = \{s_{ij}\}$ ($s_{ij} \in \mathbb{R}^n$) with the required properties. We start with a vector $u^0 = d$ and multi-set $S^0 = \{s^0_{ij}\}$ and $s^0_{ij} = 0$ for all  $i \neq j$ and $i,j \in [n]$. At the $k^{\text{th}}$ step of the construction, we will have $Au^k = 0$, $As = 0$ for all $s \in S^k$, $d = u^k + \sum_{s \in S^k} s$ and each element of $s$ is conformal to $d$.

In $k^{\text{th}}$ iteration, pick the element with the smallest absolute value (say $v$) in $u^{k-1}$. Let us assume it corresponds to $y_p^j$. Now pick an element from $u^{k-1}$ corresponding to $y_q^j$ for $p \neq q \in [m]$ with at least absolute value $v$ albeit with opposite sign. Note that such an element should exist since $Au^{k-1} = 0$. Let $p_1$ and $p_2$ denote the indices of these elements in $u^{k-1}$. Let $S^k$ be same as $S^{k-1}$ except for $s^{k}_{pq}$ which is given by $s^k_{pq} = s^{k-1}_{pq} + r = s^{k-1}_{pq} + u_{p_1}^{k-1} e_{p_1} - u_{p_1}^{k-1} e_{p_2}$ where $e_i$ denotes a vector in $\mathbb{R}^n$ with zero in all components except in $i^{\text{th}}$ position (where it is one). Note that $Ar = 0$ and $r$ is conformal to $d$ since it has the same sign. Let $u^{k+1} = u^{k} - r$. Note that $Au^{k+1} = 0$ since $Au^k = 0$ and $Ar = 0$. Also observe that $As = 0$ for all $s \in S^{k+1}$ and $u^{k+1} = \sum_{s \in S^k} s = d$.

Finally, note that each iteration the number of non-zero elements of $u^{k}$ decrease by at least 1. Therefore, this algorithm terminates after a finite number of iterations. Moreover, at termination $u^{k} = 0$ otherwise the algorithm can always pick an element and continue with the process. This gives us the required conformal multi-set.
\end{proof}

Now consider a set $\{d_{ij}'\}$ which is conformal to $d$. We define $\hat{x}_{k+1}$ in the following manner:
\[ \hat{x}^{k+1}_i = \left\{ 
  \begin{array}{l l}
    x^k_i + d_{ij}' & \quad \text{if $(i,j) = (i_k,j_k)$}\\
    x^k_i & \quad \text{if $(i,j) \neq (i_k,j_k)$}
\end{array} \right.\]

\begin{lemma}
\label{lem:non-smooth-lemma}
For any $x \in \mathbb{R}^n$ and $k \geq 0$,
$$\mathbb{E}[\|\hat{x}^{k+1} - x^k\|^2 \leq \lambda(\|\tilde{x}^{k+1} - x^k\|^2 ).$$
We also have
$$
\mathbb{E}(h(\hat{x}^{k+1})) \leq (1 - \lambda) h(x^{k}) + \lambda h(\tilde{x}^{k+1}).
$$
\end{lemma}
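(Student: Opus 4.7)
My plan is to prove the two inequalities separately, relying in both cases on the conformal decomposition $d = \sum_{i\neq j} d_{ij}'$ (with $d_{ij}'$ supported on blocks $i,j$) that was just established, together with the uniform clique probability $\lambda = 2/(b(b-1))$ and the coordinate-wise separability of $h$.

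For the first inequality, I will compute the expectation directly. Since the random pair $(i_k,j_k)$ is sampled uniformly, and $\hat{x}^{k+1} - x^k$ equals $d_{i_kj_k}'$ in the realization where pair $(i_k,j_k)$ is drawn,
\begin{align*}
\mathbb{E}[\|\hat{x}^{k+1} - x^k\|^2] = \lambda \sum_{i \neq j} \|d_{ij}'\|^2.
\end{align*}
The central step is then a coordinate-wise comparison: for each scalar coordinate $c$, the conformality assumption means every $(d_{ij}')_c$ has the same sign as $d_c$, so $|d_c| = \sum_{i \neq j} |(d_{ij}')_c|$, and hence $d_c^2 \ge \sum_{i \neq j} ((d_{ij}')_c)^2$ because cross terms between non-negative quantities are non-negative. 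Summing over coordinates gives $\sum_{i\neq j}\|d_{ij}'\|^2 \le \|d\|^2 = \|\tilde{x}^{k+1}-x^k\|^2$, which closes the first bound.

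For the second inequality, I will exploit coordinate-wise separability to reduce the problem to a scalar Jensen inequality. Fix a coordinate $c$ lying in block $i$. Only pairs $(i,j)$ with $j\neq i$ contribute to $(d_{ij}')_c$, and by conformality I can write $(d_{ij}')_c = \alpha_{ij} d_c$ with $\alpha_{ij}\geq 0$ and $\sum_{j\neq i} \alpha_{ij} = 1$, so each $\alpha_{ij}\in[0,1]$. Rewriting $x^k_c + (d_{ij}')_c = (1-\alpha_{ij}) x^k_c + \alpha_{ij} \tilde{x}^{k+1}_c$ and applying convexity of $h_c$ yields
\begin{align*}
\sum_{j\neq i} h_c\bigl(x^k_c + (d_{ij}')_c\bigr) \leq (b-2) h_c(x^k_c) + h_c(\tilde{x}^{k+1}_c).
\end{align*}
Taking the expectation over the choice of edge---accounting for the $b-1$ pairs that touch block $i$ (each with probability $\lambda$) and the remaining pairs that leave coordinate $c$ unchanged (total probability $1-2/b$)---and substituting $\lambda(b-2) + (1-2/b) = 1-\lambda$ (which I will verify by direct algebra with $\lambda = 2/(b(b-1))$) gives $\mathbb{E}[h_c((\hat{x}^{k+1})_c)] \leq (1-\lambda) h_c(x^k_c) + \lambda h_c(\tilde{x}^{k+1}_c)$. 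Summing across coordinates yields the stated bound.

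I anticipate the main obstacle to be the bookkeeping around the uniform clique probabilities: making sure the two probability masses (pairs containing block $i$ versus pairs not containing it) add up exactly to $1-\lambda$ after the convex combination, so that the resulting coefficient pattern cleanly matches the desired inequality. The conformality-based bounds themselves are elementary once the right coordinate-wise viewpoint is adopted.
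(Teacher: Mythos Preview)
Your argument is correct and, for the first inequality, matches the paper's proof exactly: both compute $\mathbb{E}[\|\hat{x}^{k+1}-x^k\|^2]=\lambda\sum_{i\neq j}\|d_{ij}'\|^2$ and then use conformality to obtain $\sum_{i\neq j}\|d_{ij}'\|^2\le\|\sum_{i\neq j}d_{ij}'\|^2=\|d\|^2$; your coordinate-wise phrasing is just an explicit unpacking of that one-line step.

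For the second inequality the paper gives no argument at all---it simply writes ``the remaining part directly follows from \cite{Necoara14}.'' Your convexity-based coordinate-wise argument (writing $(d_{ij}')_c=\alpha_{ij}d_c$ with $\alpha_{ij}\ge 0$, $\sum_{j\neq i}\alpha_{ij}=1$, applying Jensen, and verifying $\lambda(b-2)+(1-2/b)=1-\lambda$) is exactly the standard proof one finds in that reference, so you have supplied the details the paper omits rather than taken a different route. The bookkeeping you flag as the main obstacle checks out: $\tfrac{2(b-2)}{b(b-1)}-\tfrac{2}{b}=-\tfrac{2}{b(b-1)}=-\lambda$, so the coefficients line up cleanly.
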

\begin{proof}
We have the following bound:
\begin{align*}
\mathbb{E}_{i_kj_k}[\|\hat{x}^{k+1} - x^k\|^2  &= \lambda \sum_{i \neq j} \|d_{ij}'\|^2 
\leq \lambda \|\sum_{i \neq j} d_{ij}' \|^2
= \lambda \|d\|^2 = \lambda\|\tilde{x}^{k+1} - x^k\|^2.
\end{align*}
The above statement directly follows the fact that $\{d_{ij}'\}$ is conformal to $d$. The remaining part directly follows from \cite{Necoara14}.
\end{proof}

The remaining part essentially on similar lines as \citep{Necoara14}. We give the details here for completeness. From Lemma 1, we have
\begin{align*}
\mathbb{E}_{i_kj_k}[F(x^{k+1})] &\leq \mathbb{E}_{i_kj_k}[f(x^k) + \langle \nabla f(x^k), d_{i_kj_k} \rangle + \frac{L}{2} \|d_{i_kj_k}\|^2 + h(x^k + d_{i_kj_k})] \\
&\leq \mathbb{E}_{i_kj_k}[f(x^k) + \langle \nabla f(x^k), d_{i_kj_k}' \rangle + \frac{L}{2} \|d_{i_kj_k}'\|^2 + h(x^k + d_{i_kj_k}')] \\
&= f(x^k) + \lambda \left(\langle \nabla f(x), \sum_{i \neq j} d_{ij}' \rangle + \sum_{i \neq j} \frac{L}{2} \|d_{ij}'\|^2 + \sum_{i \neq j} h(x + d_{ij}') \right) \\
&\leq (1 - \lambda) F(x^k) + \lambda(f(x^k) + \langle \nabla f(x),d\rangle + \frac{L}{2} \|d\|^2 + h(x + d)) \\
&\leq \min_{\{y | Ay = 0\}} (1 - \lambda) F(x^k) + \lambda (F(y) + \frac{L}{2} \|y - x^{k}\|^2) \\
&\leq \min_{\beta \in [0,1]} (1 - \lambda) F(x^k) + \lambda ( F(\beta x^* + (1-\beta)x^{k}) + \frac{\beta^2 L}{2} \|x^k - x^*\|^2) \\
&\leq (1 - \lambda) F(x^k) + \lambda \left(F(x^k) - \frac{2(F(x^k) - F(x^*))^2}{LR^2(x^0)}\right).
\end{align*}

The second step follows from optimality of $d_{i_kj_k}$. The fourth step follows from Lemma~\ref{lem:non-smooth-lemma}. Now using the similar recurrence relation as in Theorem 2, we get the required result.
\end{proof}

\section{Reduction of General Case}
\label{sec:reduction}

In this section we show how to reduce a problem with linear constraints to the form of Problem 
\ref{eq:reduced-optimization-problem}
in the paper. For simplicity, we focus on smooth objective functions.
However, the formulation can be extended to composite objective functions
along similar lines.
Consider the optimization problem 
\begin{align*}
\min_x & \ f(x) \\
s.t. \  Ax = &\sum A_i x_i = 0,
\end{align*}
where $f_i$ is a convex function with an $L$-Lipschitz gradient.

Let $\nb{A_i}$ be a matrix with orthonormal columns satisfying $\mathrm{range}(\nb{A_i}) = \ker({A_i})$
, this can be obtained (e.g. using SVD).
For each $i$, define $y_i = A_i x_i$
and assume that the rank of $A_i$ is less than or equal to the dimensionality of $x_i$. 
\footnote{If the rank constraint is not satisfied
then one solution is to use a coarser partitioning of $x$
so that the dimensionality of $x_i$
is large enough.
}
Then we can rewrite $x$ as a function $h(y, z)$ satisfying
\begin{align*}
x_i = A_i^+ y_i + \nb{A_i} z_i,
\end{align*}
for some unknown $z_i$, where $C^+$ denote the pseudo-inverse of $C$. The problem then becomes
\begin{align}
\min_{y,z} g(y, z) \ \ s.t. \  \sum_{i=1}^N y_i = 0,
\label{eq:transformed}
\end{align}
where
\begin{align}
g(y, z) = f(\phi(y,z)) = f \left( \sum_i U_i (A_i^+ y_i + \nb{A_i} z_i) \right).
\label{eq:g}
\end{align}
It is clear that the sets $S_1 = \{x | Ax = 0\}$ and $S_2 = \{ \phi(y,z) | \sum_i y_i = 0 \}$
are equal and hence the problem defined in \ref{eq:transformed} is equivalent to
that in \ref{eq:opt}.

Note that such a transformation preserves convexity of the objective function. It is also easy to show that it preserves the block-wise Lipschitz continuity of the gradients as we prove
in the following result.

\begin{lemma}
Let $f$ be a function with $L_i$-Lipschitz gradient w.r.t $x_i$. 
Let $g(y, z)$ be the function defined in $\ref{eq:g}$. Then
$g$ satisfies the following condition
\begin{align*}
\|\nabla_{y_i} g(y,z) - \nabla_{y_i} g(y',z)\| \leq  \frac{L_i}{\sigma_{\min}^2(A_i)}\|y_i - y_i'\| \\
\|\nabla_{z_i} g(y,z) - \nabla_{z_i} g(y,z')\| \leq  L_i \|z_i - z_i'\|, \\
\end{align*}
where $\sigma_{\min}(B)$ 
denotes the minimum non-zero singular value of $B$.
\end{lemma}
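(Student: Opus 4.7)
My plan is to compute the partial gradients of $g$ via the chain rule applied to $\phi(y,z) = \sum_i U_i(A_i^+ y_i + \nb{A_i} z_i)$, and then bound each resulting factor using (i) the block Lipschitz assumption on $\nabla f$, (ii) the operator norm of the pseudo-inverse $A_i^+$, and (iii) the fact that $\nb{A_i}$ has orthonormal columns. The key observation is that perturbing only $y_i$ (respectively $z_i$) changes only the $i$-th block $x_i = A_i^+ y_i + \nb{A_i} z_i$ of $x = \phi(y,z)$, so we can invoke the block-wise Lipschitz property directly.

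First I would write, using the chain rule and the definition of $\phi$,
\begin{align*}
\nabla_{y_i} g(y,z) = (A_i^+)^\top \nabla_{x_i} f(\phi(y,z)), \qquad \nabla_{z_i} g(y,z) = \nb{A_i}^\top \nabla_{x_i} f(\phi(y,z)).
\end{align*}
For the $y_i$-bound, fix $y, y'$ that agree except in block $i$, and let $x = \phi(y,z)$, $x' = \phi(y',z)$. Then $x_j = x'_j$ for $j \neq i$ and $x_i - x'_i = A_i^+(y_i - y'_i)$. Using submultiplicativity and Assumption~\ref{ass:lipschitz-gradient},
\begin{align*}
\|\nabla_{y_i} g(y,z) - \nabla_{y_i} g(y',z)\| &\leq \|A_i^+\|\,\|\nabla_{x_i} f(x) - \nabla_{x_i} f(x')\| \\
&\leq \|A_i^+\| \cdot L_i \|x_i - x'_i\| \\
&\leq \|A_i^+\|^2 \cdot L_i \|y_i - y'_i\|.
\end{align*}
Since $\|A_i^+\| = 1/\sigma_{\min}(A_i)$ (the reciprocal of the smallest nonzero singular value of $A_i$), the first bound follows.

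For the $z_i$-bound, the same kind of perturbation argument applies with $x_i - x'_i = \nb{A_i}(z_i - z'_i)$. Because $\nb{A_i}$ has orthonormal columns, $\|\nb{A_i}^\top\| = 1$ and $\|\nb{A_i}(z_i - z'_i)\| = \|z_i - z'_i\|$, giving
\begin{align*}
\|\nabla_{z_i} g(y,z) - \nabla_{z_i} g(y,z')\| \leq \|\nb{A_i}^\top\| \cdot L_i \|\nb{A_i}(z_i - z'_i)\| = L_i \|z_i - z'_i\|.
\end{align*}

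There is no real hard step here; the only thing to be careful about is verifying that the chain rule indeed produces the two expressions above (which follows since $\phi$ is affine in $(y,z)$ with block-diagonal Jacobians $A_i^+$ and $\nb{A_i}$), and that $\|A_i^+\|$ equals $1/\sigma_{\min}(A_i)$ under the convention that $\sigma_{\min}$ means the smallest \emph{nonzero} singular value. The rank condition on $A_i$ stated in the reduction ensures that this $\sigma_{\min}(A_i)$ is strictly positive, so the bound is finite.
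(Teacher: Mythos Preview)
Your proof is correct and follows essentially the same route as the paper: compute $\nabla_{y_i} g = (A_i^+)^\top \nabla_{x_i} f$ and $\nabla_{z_i} g = \nb{A_i}^\top \nabla_{x_i} f$ via the chain rule, then bound using submultiplicativity, the block-Lipschitz assumption on $f$, and the identities $\|A_i^+\| = 1/\sigma_{\min}(A_i)$ and $\|\nb{A_i}\| = 1$. The paper's version is slightly terser (it leaves the $z_i$ case as ``similar''), but the argument is the same.
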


\begin{proof}
We have
\begin{align*}
\|\nabla_{y_i} g(y,z) - \nabla_{y_i} g(y',z)\| & = \|(U_i A_i^+)^\top 
	[\nabla_x f(\phi(y,z)) - \nabla_x f(\phi(y', z))]\| \\
	& \leq \|A_i^+\| \| \nabla_i f(\phi(y,z)) - \nabla_i f(\phi(y',z)) \| \\
	& \leq L_i \|A_i^+\| \| A_i^+ (y_i - y_i') \| \leq L_i \|A_i^+\|^2 \|y_i - y_i'\| = \frac{L_i}{\sigma_{\min}^2(A_i)}\|y_i - y_i'\|,
\end{align*}
Similar proof holds for $\|\nabla_{z_i} g(y,z) - \nabla_{z_i} g(y,z')\|$
, noting that $\| \nb{A_i} \| = 1$.
\end{proof}

It is worth noting that this reduction is mainly used to simplify analysis.
In practice, however, we observed that 
an algorithm that operates directly on the original variables $x_i$ 
(i.e. Algorithm \ref{alg:composite-minimization})
converges much faster and is much less sensitive to the conditioning of $A_i$
compared to an algorithm that operates on $y_i$ and $z_i$. 
Indeed, with appropriate step sizes, Algorithm \ref{alg:composite-minimization}
minimizes, in each step, a tighter bound on the objective function compared to
the bound based \ref{eq:transformed} as stated in the following result. 

\begin{lemma}
Let $g$ and $\phi$ be as defined in \ref{eq:g}. And let 
\begin{align*}
d_i = A_i^+ d_{y_i} + \nb{A_i} d_{z_i}.
\end{align*}
Then, for any $d_i$ and $d_j$ satisfying $A_i d_i + A_j d_j = 0$ 
and any feasible $x = \phi(y, z)$ we have
\begin{align*}
& \langle \nabla_i f(x), d_i \rangle + \langle \nabla_j f(x), d_j \rangle
+ \frac{L_i}{2 \alpha} \| d_i \|^2 + \frac{L_j}{2 \alpha} \| d_j \|^2 \\
& \leq \langle \nabla_{y_i} g(y,z), d_{y_i} \rangle
+ \langle \nabla_{z_i} g(y,z), d_{z_i} \rangle
+ \langle \nabla_{y_j} g(y,z), d_{y_j} \rangle
+ \langle \nabla_{z_j} g(y,z), d_{z_j} \rangle \\
& + \frac{L_i}{2 \alpha \sigma^2_{\min} (A_i)} \| d_{y_i} \|^2 
+ \frac{L_i}{2 \alpha} \| d_{z_i} \|^2
+ \frac{L_j}{2 \alpha \sigma^2_{\min} (A_j)} \| d_{y_j} \|^2
+ \frac{L_j}{2 \alpha} \| d_{z_j} \|^2.
\end{align*}
\end{lemma}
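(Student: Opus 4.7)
The plan is to prove the inequality termwise: the quadratic part will follow from an orthogonal decomposition of $d_i$ together with a singular-value bound on $\|A_i^+\|$, and the linear part will follow from a straightforward chain rule computation. Once both parts are handled for the block $i$, the analogous inequality for block $j$ is identical, and summing gives the statement.

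First I would record the two geometric facts about the splitting $d_i = A_i^+ d_{y_i} + \nb{A_i} d_{z_i}$. Since $A_i$ has full row rank (as assumed in the reduction), $A_i^+ = A_i^\top (A_i A_i^\top)^{-1}$, so $\mathrm{range}(A_i^+) = \mathrm{range}(A_i^\top)$; on the other hand $\mathrm{range}(\nb{A_i}) = \ker(A_i)$ by construction, and these two subspaces are orthogonal complements. Combined with $\nb{A_i}^\top \nb{A_i} = I$ (orthonormal columns), this yields the Pythagoras-type identity $\|d_i\|^2 = \|A_i^+ d_{y_i}\|^2 + \|d_{z_i}\|^2$. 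Applying the operator norm bound $\|A_i^+ d_{y_i}\|^2 \le \|A_i^+\|_2^2 \|d_{y_i}\|^2 = \sigma_{\min}^{-2}(A_i)\|d_{y_i}\|^2$ then produces
\begin{align*}
\tfrac{L_i}{2\alpha} \|d_i\|^2 \ \le\ \tfrac{L_i}{2\alpha \sigma_{\min}^2(A_i)} \|d_{y_i}\|^2 + \tfrac{L_i}{2\alpha} \|d_{z_i}\|^2,
\end{align*}
which matches the quadratic part of the claim.

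Next, for the linear part, I would apply the chain rule to $g(y,z) = f(\phi(y,z))$ with $\phi(y,z) = \sum_i U_i(A_i^+ y_i + \nb{A_i} z_i)$. Since $\partial \phi / \partial y_i = U_i A_i^+$ and $\partial \phi / \partial z_i = U_i \nb{A_i}$, we obtain $\nabla_{y_i} g(y,z) = (A_i^+)^\top \nabla_i f(x)$ and $\nabla_{z_i} g(y,z) = \nb{A_i}^\top \nabla_i f(x)$. Substituting the decomposition of $d_i$ then yields the exact identity
\begin{align*}
\langle \nabla_i f(x), d_i \rangle \ =\ \langle \nabla_{y_i} g(y,z), d_{y_i} \rangle + \langle \nabla_{z_i} g(y,z), d_{z_i} \rangle.
\end{align*}

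Combining the linear identity with the quadratic inequality gives the desired bound for the block $i$; the symmetric argument for $j$ is identical, and adding the two block-wise inequalities delivers the stated result. The feasibility hypothesis $A_i d_i + A_j d_j = 0$ is not actually used in the inequality; it only plays a role at the level of the algorithm (ensuring the update is feasible). I do not anticipate a serious obstacle here — the only subtle point is verifying that the two summands of $d_i$ are orthogonal, which relies on $A_i$ being full row rank so that $\mathrm{range}(A_i^+)$ coincides with $\mathrm{range}(A_i^\top) = \ker(A_i)^\perp$; this was the reason the reduction section imposed that rank condition in the first place.
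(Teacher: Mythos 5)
Your proof is correct and follows essentially the same route as the paper's, which simply cites the chain-rule identity $\nabla_i f(x) = A_i^\top \nabla_{y_i} g(y,z) + \nb{A_i}\nabla_{z_i} g(y,z)$ and leaves the rest implicit. You usefully make explicit the two ingredients the paper glosses over—the exact equality of the linear terms via $\nabla_{y_i} g = (A_i^+)^\top \nabla_i f$, $\nabla_{z_i} g = \nb{A_i}^\top \nabla_i f$, and the orthogonality of $\mathrm{range}(A_i^+)=\ker(A_i)^\perp$ and $\mathrm{range}(\nb{A_i})=\ker(A_i)$ giving $\|d_i\|^2 = \|A_i^+ d_{y_i}\|^2 + \|d_{z_i}\|^2 \le \sigma_{\min}^{-2}(A_i)\|d_{y_i}\|^2 + \|d_{z_i}\|^2$—and you are right that the feasibility hypothesis plays no role in the inequality itself.
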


\begin{proof}
The proof follows directly from the fact that
\begin{align*}
\nabla_{i} f(x) = {A_i^+}^\top \nabla_{y_i} g(y,z) + {\nb{A_i}}^\top \nabla_{z_i} g(y,z).
\end{align*}
\end{proof}

\end{document}